%
%
%
%
%
\documentclass[12pt]{amsart}
\usepackage{amssymb}
\usepackage{multicol}
\usepackage{colordvi}
\usepackage{graphicx}
\usepackage{epsfig, epsf}
%
%
\headheight=8pt       \topmargin=10pt     
\textheight=665pt     \textwidth=456pt    
\oddsidemargin=6pt   \evensidemargin=6pt  
%

\numberwithin{equation}{section}
\newtheorem{theorem}{Theorem}[section]
\newtheorem{proposition}[theorem]{Proposition}
\newtheorem{lemma}[theorem]{Lemma}
\newtheorem{corollary}[theorem]{Corollary}
\newtheorem{defn}[theorem]{Definition}
\newtheorem{ex}[theorem]{Example}
\newtheorem{rem}[theorem]{Remark}
\newenvironment{example}{\begin{ex}\rm}{\end{ex}}
\newenvironment{definition}{\begin{defn}\rm}{\end{defn}}


\copyrightinfo{}{}
\newcounter{FNC}[page]
\def\fauxfootnote#1{{\addtocounter{FNC}{2}\Magenta{$^\fnsymbol{FNC}$}%
     \let\thefootnote\relax\footnotetext{\Magenta{$^\fnsymbol{FNC}$#1}}}}

%
%


\newcommand{\spn}{\operatorname{span}}
\newcommand{\A}[1]{\operatorname{AGL}(#1,\Z)}
\newcommand{\GL}{\operatorname{GL}}

\newcommand{\F}{\mathbb{F}}

\newcommand{\R}{\mathbb{R}}
\newcommand{\Z}{\mathbb{Z}}
\newcommand{\N}{\mathbb{N}}
\renewcommand{\P}{\mathbb{P}}


\title{Minkowski Length of 3D Lattice Polytopes}

\author{Olivia Beckwith}
\address{Harvey Mudd College\\
Department of Mathematics\\
301 Platt Boulevard, Claremont, CA 91711\\
USA
}
\email{obeckwith@gmail.com}
\thanks{}

\author{Matthew Grimm}
\address{Department of Mathematics\\
        UCSD\\
        9500 Gilman Dr., \#0112\\
        La Jolla, CA 92093
        USA}
\email{mgrimm2@kent.edu}
\thanks{}

\author{Jenya Soprunova}
\address{Department of Mathematics\\
        Kent State University\\
        Summit Street, Kent, OH 44242\\
        USA}
\email{soprunova@math.kent.edu}
\urladdr{http://www.math.kent.edu/~soprunova/}
\author{Bradley Weaver}
\address{Grove City College\\
Department of Mathematics\\
100 Campus Drive,Grove City PA 16127\\
USA
}
\email{weaverbr1@gcc.edu}
\thanks{The authors were supported by  funds from NSF-REU Grant DMS-0755318}

\subjclass{52B20, 94B27, 14G50}

\begin{document}

\begin{abstract}
We study the Minkowski length $L(P)$ of a
lattice polytope $P$, which is defined to be the largest number of
non-trivial primitive segments whose Minkowski sum lies in $P$. The
Minkowski length represents the largest possible number of factors in a
factorization of polynomials with exponent vectors in $P$, and shows up in
lower bounds for the minimum distance of toric codes.  In this paper we give a
polytime algorithm for computing $L(P)$ where $P$ is a 3D lattice
polytope.  

We next study 3D lattice polytopes of Minkowski length 1. In particular, we show that
if $Q$, a subpolytope of   $P$, is the Minkowski sum of $L=L(P)$ lattice polytopes $Q_i$,
each of Minkowski length 1,  then the total number of interior lattice points of the polytopes
$Q_1,\cdots, Q_L$ is at most 4. Both results extend previously known results for lattice polygons.
 Our methods differ substantially from those used in the two-dimensional case.
\end{abstract}

\maketitle

\section*{Introduction}

Let $P$ be a convex lattice polytope in $\R^n$. Then $P$ defines $\mathcal{L}_{\F}(P)$, a vector space  over a field $\F$ spanned by the monomials in $P$. That is,

$$\mathcal{L}_{\F}(P)
=\spn_{\F}\{t^m\ |\ m\in P\cap\Z^n\},$$
where $t^m=t_1^{m_1}\cdots t_n^{m_n}.$ In this paper we address the following question:  What is the largest number of factors that a polynomial in $\mathcal{L}_{\F}(P)$ could have?
We also study those factors and obtain results regarding their Newton polytopes.  Although these questions are interesting on its own,
our motivation comes from studying toric codes.

The {\it toric code} $\mathcal{C}_P$, first introduced by Hansen in
\cite{Han}, is defined by evaluating the polynomials in $\mathcal{L}_{\F_q}(P)$ at all the points
$t$ in the algebraic torus $(\F_q^*)^n$. That is, $\mathcal{C}_P$ is a linear code whose codewords 
are the strings $(f(t)\ |\ t\in (\F_q^*)^n)$ for
 $f\in\mathcal{L}_{\F_q}(P)$.
 It is convenient to assume that $P$ is contained in the square $[0,q-2]^n$,
so that all the monomials in $\mathcal{L}_{\F_q}(P)$ are linearly independent
over $\F_q$~\cite{Ru}. Thus $\mathcal{C}_P$ has block length  $(q-1)^n$ and 
dimension equal to the number of the lattice points in $P$.

Note that the weight of each non-zero codeword in $\mathcal{C}_P$ is
the number of points $t\in(\F_q^*)^n$ where the corresponding polynomial does not
vanish. Therefore, the minimum distance of $\mathcal{C}_P$ (which is the minimum weight for linear codes)
equals 
$$d(\mathcal{C}_P)=(q-1)^n-\max_{0\neq f\in\mathcal{L}_{\F_q}(P)}Z(f),$$
where $Z(f)$ is the number of zeroes  (i.e. points of vanishing) in $(\F_q^*)^n$ 
of $f$.

For toric surface codes, that is, in the case $n=2$, Little and Schenck in \cite{LSch} used Hasse-Weyl bound and the intersection theory on toric surfaces to come up with the following general idea: If $q$ is sufficiently large, then polynomials $f\in\mathcal{L}_{\F_q}(P)$ with more absolutely irreducible factors will
necessarily have more zeroes in $(\F_q^*)^2$ (\cite{LSch}, Proposition 5.2).  In \cite{SS1} this idea was expanded  to produce explicit bounds for the
minimum distance of $\mathcal{C}_P$ in terms of  a certain geometric invariant $L(P)$, (full) Minkowski length of $P$, which was introduced  in that paper.

This invariant $L(P)$ reflects the largest possible number of
absolutely irreducible factors a polynomial $f\in\mathcal{L}_{\F_q}(P)$ can have. A polytime algorithm for computing $L(P)$  for polygons was provided in \cite{SS1}.
In this paper,  we extend this result to dimension 3 (Theorem~\ref{T:alg}, based on Theorem~\ref{T:3d2}).

 Moreover, \cite{SS1} provides a description of the factorization $f=f_1\cdots f_{L(P)}$
for $f\in\mathcal{L}_{\F_q}(P)$ with the largest number of factors: it turns out that in such a factorization
the Newton polygon $P_{f_i}$ (which is the convex hull of the exponents of the
monomials in $f_i$) is either a primitive segment, a unit simplex, or
a triangle with exactly 1 interior point. It is also shown in \cite{SS1} that a triangle with an interior point
can occur in such a factorization at most once. This implies that the total number $I$ of interior 
lattice points of  $P_{f_i}$ is at most 1. This result is essential for establishing bounds on the minimum distance of toric surface codes in~\cite{SS1}.

This argument is not directly extendable to dimension 3, as  it does not seem feasible to obtain  a description of the Newton polytopes $P_{f_i}$ in dimension 3 (See~\cite{MSRI} for some examples).
Nevertheless, in this paper we show that in the 3D case $I \leq 4$ (Theorem~\ref{T:main}). Our methods
differ substantially from those used in the two-dimensional case. An initial version of our argument relied heavily on the  classification of 3D Fano tetrahedra \cite{Kas}. Although we were able to completely get rid of this dependency in our final argument, the classification helped us significantly in our explorations. 

In \cite{SS1} combinatorial results about Minkowski length of polygons  lead to lower bounds on the minimum distance of surface toric codes. We hope that our (entirely combinatorial) paper will in the future lead to similar bounds for 3D toric codes.

%
\section{Minkowski Length of Lattice Polytopes}

Here we recall the definition of the (full) Minkowski length introduced in \cite{SS1} as well as reproduce and refine some results
from that paper using new methods which will later be applied to the 3D case. 
\subsection{Minkowski sum}
Let $P$ and $Q$ be convex polytopes in $\R^n$. Their {\it Minkowski sum} is
$$P+Q=\{p+q\in\R^n\ |\ p\in P,\ q\in Q\},$$
which is again a convex polytope. Figure \ref{F:sum} shows the Minkowski sum of a triangle and a square.
\begin{figure}[h]
\centerline{
 \scalebox{0.55}
 {
\input{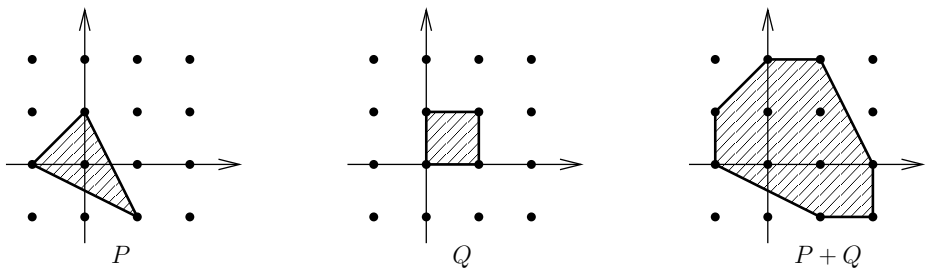}}}
\caption{The Minkowski sum of two polygons}
\label{F:sum}
\end{figure}

Let $f$ be a Laurent polynomial in  $\F_q[t_1^{\pm 1},\dots,t_n^{\pm 1}]$. Then its
{\it Newton polytope} $P_f$ is the convex hull of the exponent vectors 
of the monomials appearing in $f$. Thus $P_f$ is a {\it lattice polytope} as its vertices belong to the integer lattice $\Z^n\subset\R^n$. Note that if $f,g\in \F_q[t_1^{\pm 1},\dots,t_n^{\pm 1}]$ then the Newton polytope of their product $P_{fg}$ is the Minkowski sum $P_f+P_g$.  A {\it primitive segment} $E$
is a lattice segment whose only lattice points are its endpoints. The difference of the endpoints 
is a vector $v_E$ whose coordinates are relatively prime ($v_E$ is defined up to sign).
A polytope which is the Minkowski sum of primitive segments is called a {\it (lattice) zonotope}. 
We say that two lattice polytopes are {\it equal} if they are the same up to translation.

The automorphism group of the lattice is the group of {affine unimodular transformations}, denoted 
by  $\A{n}$, which consists of translations by an integer vector and linear transformations in $\GL(n,\Z)$. It is a standard fact from lattice-point geometry
that any two primitive segments in $\R^n$ are $\A{n}$-equivalent (\cite{Newman}, Theorem II.1).
\subsection{Minkowski length}
Let $P$ be a lattice polytope in $\R^n$.  

\begin{definition}\label{D:maximal}
The (full) Minkowski length $L=L(P)$ of a lattice polytope $P$ is the largest number
of primitive segments $E_1, E_2,\dots, E_L$ whose Minkowski sum is in $P$.
Equivalently, $L=L(P)$ is the largest number of non-trivial lattice polytopes $Q_1,\dots, Q_L$  whose 
Minkowski sum is in $P$.  Any collection of $L=L(P)$ non-trivial lattice polytopes $Q_1,\dots, Q_L$   whose 
Minkowski sum is in $P$  will be referred to as  a {\it maximal
(Minkowski) decomposition in P}. The {\it dimension} of a maximal decomposition is the dimension of the Minkowski sum
$Q_1+\cdots+Q_L$. 
\end{definition}

\begin{example}
In the figure below, the first polygon, called $T_0$, has Minkowski length 1. For the second one, the Minkowski length is 2. Notice that this triangle has many maximal decompositions: the sum of two horizontal segments, the sum of two vertical segments, the sum of two diagonal segments, the sum of two standard 2-simplices, etc.
For the last polygon, the Minkowski length is 3, as there is a parallelogram inside that is a sum of three lattice segments.

\begin{figure}[h]
\centerline{
 \scalebox{0.7}
 {
\input{T00.pstex_t}\ \ \input{2Delta.pstex_t}\input{P.pstex_t}}}
\label{F:MinkLength}
\end{figure}
\end{example}

Clearly, $L(P)$ is an $\A{n}$-invariant and the summands of every maximal decomposition in $P$ are polytopes of
Minkowski length 1.

It does not  seems  feasible to describe
polytopes of Minkowski length 1  in general. However, in
dimension 2   such a description is given in \cite{SS1} and we  
reproduce it here. 


 \begin{figure}[h]
\centerline{
 \scalebox{0.55}
{
\input{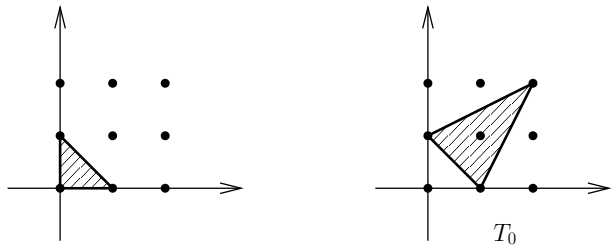}}}
\caption{Polygons of Minkowski length 1}
\label{F:indecomp}
\end{figure}

\begin{theorem}\label{T:indecomp}\cite{SS1}
 Let $P$ be a convex lattice polygon in the plane with $L(P)=1$.  Then $P$ is $\A2$-equivalent
 to a primitive segment,  the standard 2-simplex $\mathcal{D}$ or the triangle $T_0$ with
vertices $(1,0)$, $(0,1)$ and $(2,2)$. 
\end{theorem}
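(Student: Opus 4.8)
The plan is to translate the hypothesis $L(P)=1$ into two concrete forbidden configurations and then classify all lattice polygons avoiding both. Since the Minkowski sum of two primitive segments is either a segment of lattice length $2$ (when the segments are parallel) or a lattice parallelogram of positive area (when they are independent), and conversely any such configuration inside $P$ produces two primitive segments whose sum lies in $P$, the condition $L(P)=1$ is equivalent to: (i) no line meets $P$ in three lattice points, and (ii) $P$ contains no lattice parallelogram of positive area. For the nontrivial direction of (ii), note that if $P$ contains a lattice parallelogram with edge vectors $w_1,w_2$, then replacing each $w_i$ by a primitive vector in its direction yields a smaller parallelogram sitting in a corner, still inside $P$. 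I would first dispose of the low-dimensional case: if $\dim P\le 1$ then $P$ is a point or a lattice segment, and (i) forces $P$ to be a primitive segment.

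So assume $\dim P=2$. The key quantitative step is to bound the number of lattice points of $P$. If $P$ had at least five lattice points, then by the pigeonhole principle two distinct ones, say $a$ and $b$, would be congruent modulo $2$; their midpoint $\tfrac12(a+b)$ is then a lattice point lying in $P$ by convexity, producing three collinear lattice points and contradicting (i). Hence $P$ has at most four lattice points, no three of which are collinear, and I would split into cases according to whether $P$ has three or four lattice points.

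If $P$ has exactly three lattice points, they are the non-collinear vertices of a triangle with no other lattice points, so by Pick's theorem its area is $1/2$; such a triangle is unimodular and hence $\A2$-equivalent to $\mathcal{D}$. If $P$ has four lattice points with no three collinear, there are two sub-cases. When the four points are in convex position, $P$ is a convex quadrilateral with $B=4$, $I=0$, so Pick gives area $1$; cutting along a diagonal splits $P$ into two lattice triangles whose areas sum to $1$ and are each at least $1/2$, hence both are unimodular. A short normalization (send one of them to $\operatorname{conv}\{(0,0),(1,0),(0,1)\}$ and use that the fourth vertex must again complete a unimodular triangle across the diagonal) forces $P$ to be a lattice parallelogram, which is excluded by (ii). When the four points are not in convex position, $P$ is a triangle with a single interior lattice point $p$ and all edges primitive. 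Translating $p$ to the origin, the three triangles spanned by $p$ and the edges of $P$ partition $P$; their areas sum to the total area $3/2$ (Pick, with $I=1$, $B=3$) and each is at least $1/2$, so each is exactly $1/2$ and thus unimodular. This forces each pair of vertices to be a lattice basis; normalizing two of them to $(1,0)$ and $(0,1)$, the third must be $(\pm1,\pm1)$, and the requirement that the origin be interior singles out $(-1,-1)$. The resulting reflexive triangle is $\A2$-equivalent to $T_0$ via a translation and the map $v\mapsto -v$, completing the classification.

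I expect the main obstacle to be the two four-point cases, where the bookkeeping must be done carefully: one has to be sure that ``no three collinear'' together with convexity really leaves only the parallelogram in the convex-position case, and that the unimodular-triangle decomposition in the interior-point case genuinely pins the triangle down to $T_0$ rather than to some spurious sign variant. By contrast, the reformulation of $L(P)=1$ in terms of (i) and (ii) and the pigeonhole bound on the number of lattice points should be routine.
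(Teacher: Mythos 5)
Your proof is correct, but there is nothing in the paper to compare it against: Theorem \ref{T:indecomp} is stated as a citation from \cite{SS1}, and this paper never proves it (it only re-proves a refinement about maximal decompositions containing $T_0$ or the unit simplex, using reduction of direction vectors modulo $3$). Judged on its own, your argument is a sound, self-contained route. I checked the key points: the equivalence of $L(P)=1$ with the two forbidden configurations (three collinear lattice points; a lattice parallelogram of positive area) is valid, including the shrink-to-primitive-edges step; the mod $2$ pigeonhole bound of four lattice points is right; Pick's theorem gives areas $1/2$, $1$, $3/2$ in the three cases; in the convex-position case the normalization forces the fourth vertex onto the line $x+y=2$, and convex position together with no-three-collinear leaves only $(1,1)$, i.e.\ the unit square, killed by condition (ii); in the interior-point case the three unimodular subtriangles force the third vertex to be $(\pm 1,\pm 1)$ with only $(-1,-1)$ putting the origin inside, and $\mathrm{conv}\{(1,0),(0,1),(-1,-1)\}$ is indeed $\A2$-equivalent to $T_0$ via $v\mapsto -v$ and a translation. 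Two remarks on the comparison. First, your mod $2$ pigeonhole step is exactly the trick this paper uses in Section 3 to show that a 3D polytope of Minkowski length $1$ has at most $8$ lattice points, so that part is very much in the paper's spirit. Second, your reliance on Pick's theorem is what makes the argument clean but also intrinsically two-dimensional (Pick has no 3D analogue, and lattice tetrahedra of minimal positive volume need not be unimodular), which is precisely why the paper's 3D analysis is built instead on the mod $3$ class machinery of Lemma \ref{L:mult}. One cosmetic slip: in your $\dim P\le 1$ case, a single point is excluded by $L(P)=1$ itself (a point has Minkowski length $0$), not by condition (i); this does not affect anything.
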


It is also proved in \cite{SS1}  that a maximal decomposition $Q\subseteq P$ can have at most one summand $\A2$-equivalent to $T_0$, and if this is the case, the
remaining summands are  $[0,e_1]$, $[0,e_2]$, and $[0,e_3]$, that is, $Q$ is $\A{2}$-equivalent to $Q=T_0+n_1[0,e_1]+n_2[0,e_2]+n_3[0,e_1+e_2]$. Here $e_1,e_2$ are the standard basis vectors.

We will recover this result (using a new method that will be later  applied to the 3D case) and will also show that if a triangle $\Delta$  is a summand of a maximal decomposition of a polygon $P$, then the other summands are either primitive segments, or exactly that triangle $\Delta$. That is, if $Q_1$ and $Q_2$ are triangles and $L(Q_1+Q_2)=2$, then $Q_1=Q_2$. This refinement will be important for our  3D discussion.

Before we state the result,  we set notation and prove a lemma which will also be important for our future discussion in dimension 3.
Let $P$ be a lattice polytope in $\R^n$.  For each segment whose endpoints are lattice points in $P$,  consider its direction vector reduced modulo 3.  Since $v$ and $-v$ define the same segment, we identify such vectors. Using this equivalence relation, we obtain  the set $\Z_3 \P^{n-1}$ of equivalence classes.

\begin{lemma}\label{L:mult} Let $L(P)=L(Q)=1$ and $L(P+Q)=2$, where $P$ and $Q$ are lattice polytopes in $\R^n$. Then if $P$ and $Q$ each have  a segment of some class $a$, then those two segments are equal (are the same  up to translation). If $P$ has at least two segments of class $a$, then $Q$ has no segments from that class.  
\end{lemma}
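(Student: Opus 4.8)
The plan is to reduce both assertions to a single computation about the Minkowski length of a parallelogram, and then to deduce them using monotonicity of $L$ under inclusion. Throughout I write $v_E$ for the primitive direction vector of a segment $E$. Note first that since $L(P)=L(Q)=1$, neither $P$ nor $Q$ can contain a lattice segment that is the double of a primitive segment (such a segment is already a Minkowski sum of two primitive segments), so \emph{every} segment in $P$ or $Q$ is primitive and hence has a well-defined nonzero class in $\Z_3\P^{n-1}$. I will also use repeatedly that $R\subseteq S$ implies $L(R)\le L(S)$, which is immediate from Definition~\ref{D:maximal}.

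The core claim is: \emph{if $E$ and $F$ are primitive segments of the same class $a$ that are not equal up to translation, then $L(E+F)\ge 3$.} To prove this I would first replace $F$ by its reflection if necessary, so that the chosen representatives satisfy $v_E\equiv v_F\pmod 3$; this is allowed because the class in $\Z_3\P^{n-1}$ identifies $v$ with $-v$. Since $E\ne F$ up to translation, $v_E\ne v_F$, and also $v_E\ne -v_F$ (otherwise $2v_F\equiv 0\pmod 3$, forcing $v_F\equiv 0$, contrary to primitivity). In particular $v_E$ and $v_F$ are linearly independent, so after translating we may take $E+F$ to be the genuine parallelogram with vertices $0,\,v_E,\,v_F,\,v_E+v_F$. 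Now consider the diagonal joining the opposite vertices $v_E$ and $v_F$: it lies inside $E+F$ and has direction $v_F-v_E$. Because $v_E\equiv v_F\pmod 3$, we may write $v_F-v_E=3u$ with $u\in\Z^n$ and $u\ne 0$; writing $u=mu'$ with $u'$ primitive and $m\ge 1$, this diagonal equals, up to translation, the segment $[0,3mu']$, which is the Minkowski sum of $3m\ge 3$ copies of the primitive segment $[0,u']$. Hence $L(E+F)\ge 3$. It is precisely here that the modulus $3$ is forced: a smaller modulus would only yield a diagonal that is the double of a primitive segment, giving $L\ge 2$, which is not enough.

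Granting the core claim, the first assertion is immediate. If $E\subseteq P$ and $F\subseteq Q$ are segments of the same class $a$, then $E+F\subseteq P+Q$, so $L(P+Q)\ge L(E+F)$. Were $E$ and $F$ not equal up to translation, the core claim would give $L(P+Q)\ge 3$, contradicting $L(P+Q)=2$; hence $E=F$. This holds for \emph{every} class-$a$ segment of $P$ paired with \emph{every} class-$a$ segment of $Q$. The second assertion then follows formally: if $P$ has two distinct class-$a$ segments $E_1\ne E_2$ and, for contradiction, $Q$ has some class-$a$ segment $F$, then applying the first assertion to the pairs $(E_1,F)$ and $(E_2,F)$ forces $E_1=F=E_2$, a contradiction. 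Therefore $Q$ has no segment of class $a$.

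The only substantive step is the core claim, and within it the single observation that two same-class primitive segments have a difference vector divisible by $3$; once this is in place, the long diagonal together with monotonicity of $L$ does all the work. I expect the main obstacle to be mostly presentational — making sure the reduction mod $3$ and the sign normalization are handled cleanly (so that $v_E\ne\pm v_F$ genuinely produces a \emph{nondegenerate} parallelogram with a diagonal of length a multiple of $3$) — rather than computational.
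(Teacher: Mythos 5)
Your proof of the first assertion is correct and follows essentially the paper's own route: for two same-class segments that are not translates of each other, the direction vectors are independent and (after a sign normalization) their difference is divisible by $3$, so the corresponding diagonal of the parallelogram $E+F\subseteq P+Q$ is a lattice segment of Minkowski length at least $3$, contradicting $L(P+Q)=2$. Your normalization making it always the \emph{difference} (rather than ``sum or difference'') is a cosmetic variation.

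Your deduction of the second assertion, however, has a genuine gap. You read ``$P$ has at least two segments of class $a$'' as giving $E_1\neq E_2$ and then derive a contradiction from $E_1=F=E_2$. But the equalities produced by the first assertion are only equalities \emph{up to translation}, while the hypothesis merely provides two segments that are distinct as subsets of $P$; a priori these could be parallel translates of one another, and in that case the chain $E_1=F=E_2$ (up to translation) contradicts nothing. The missing step --- which is exactly the middle sentence of the paper's proof --- is that two distinct same-class segments inside $P$ cannot be translates of each other: if $E_2=E_1+c$ with $c\neq 0$, then by convexity $P$ contains $\mathrm{conv}(E_1\cup E_2)\supseteq E_1+[0,c]$, a Minkowski sum of two nontrivial lattice segments (a parallelogram, or a segment of lattice length at least $2$ when $c$ is parallel to $E_1$), contradicting $L(P)=1$. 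Once this is noted, $E_1$ and $E_2$ are not translates of each other, $F$ cannot be a translate of both, and your appeal to the first assertion does yield $L(P+Q)\geq 3$, completing the proof.
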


\begin{proof}
If  $P$ and $Q$ both have lattice segments from the same equivalence class, then, unless these segments are equal, their Minkowski sum  contains a segment of Minkowski length 3 (since either sum or difference of the direction vectors is a multiple of 3). If $P$  has multiple segments from one class, then these segments cannot be translates of each other, as they would form a parallelogram in $P$ of Minkowski length at least 2. If $Q$ has a segment from that class, it would be not a translate  of at least one of the two segments in $P$ and we again conclude $L(P+Q)\geq 3$.
\end{proof}

\begin{theorem} Let $P$ be a convex lattice polygon. If one of the summands  of a maximal decomposition $Q$ in $P$ is $\A{2}$-equivalent to $T_0$, then $Q$ is $\A{2}$-equivalent to $Q=T_0+n_1[0,e_1]+n_2[0,e_2]+n_3[0,e_1+e_2]$. If one of the summands $\Delta$ of $Q$ is $\A{2}$-equivalent to the standard 2-simplex , then the remaining summands that are not primitive segments, are equal to  $\Delta$.
\end{theorem}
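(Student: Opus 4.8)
The plan is to run both statements through the two tools just set up, the classification Theorem~\ref{T:indecomp} and the class-matching Lemma~\ref{L:mult}. I would begin with the observation that makes the Lemma available: if $Q_1,\dots,Q_L$ is a maximal decomposition in $P$ and $i\neq j$, then $L(Q_i+Q_j)=2$. The inequality $\geq 2$ is immediate, and if $Q_i+Q_j$ contained three primitive segments, then keeping the other $L-2$ summands would exhibit $L+1$ non-trivial lattice polytopes with sum in $P$, contradicting $L(P)=L$. Thus every pair of summands satisfies the hypotheses of Lemma~\ref{L:mult}, and since each summand has Minkowski length $1$, Theorem~\ref{T:indecomp} makes each $\A{2}$-equivalent to a primitive segment, to $\mathcal{D}$, or to $T_0$. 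Everything then hinges on the classes in $\Z_3\P^1$ of the segments of $T_0$ and of $\mathcal{D}$: a direct computation shows that $T_0$ has exactly one segment in each of $[e_1]$, $[e_2]$, $[e_1+e_2]$ and three segments in $[e_1-e_2]$, whereas $\mathcal{D}$ has one segment in each of $[e_1]$, $[e_2]$, $[e_1-e_2]$ and none in $[e_1+e_2]$.

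For the first statement I would normalize one summand to be $T_0$. Since $T_0$ has three segments of class $[e_1-e_2]$, the second part of Lemma~\ref{L:mult} forbids any other summand a segment of that class; this kills a second $T_0$ and every copy of $\mathcal{D}$ whose edge-classes include $[e_1-e_2]$. A unimodular triangle has three distinct edge-classes, so the only copy of $\mathcal{D}$ surviving has edge-classes $\{[e_1],[e_2],[e_1+e_2]\}$, and the first part of Lemma~\ref{L:mult} (matching its edges with the single $[e_1]$-, $[e_2]$-, $[e_1+e_2]$-segments of $T_0$) forces it to be the triangle $T'$ with vertices $(0,0),(1,0),(1,1)$. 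This last case is eliminated by computing that $T_0+T'$ contains the rectangle $[1,2]\times[0,2]$, a translate of $[0,e_1]+[0,e_2]+[0,e_2]$, so $L(T_0+T')\geq 3$, against $L(T_0+Q_i)=2$. Hence every other summand is a primitive segment of class $[e_1]$, $[e_2]$ or $[e_1+e_2]$, and the first part of Lemma~\ref{L:mult} identifies it with $[0,e_1]$, $[0,e_2]$ or $[0,e_1+e_2]$, giving $Q=T_0+n_1[0,e_1]+n_2[0,e_2]+n_3[0,e_1+e_2]$.

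For the second statement, note first that no summand is $\A{2}$-equivalent to $T_0$: otherwise the first statement would force the summand $\Delta\cong\mathcal{D}$ to be a primitive segment. So every non-segment summand is $\A{2}$-equivalent to $\mathcal{D}$, and it remains to show that two such summands $Q_1,Q_2$ with $L(Q_1+Q_2)=2$ are equal up to translation. Normalizing $Q_1=\mathcal{D}$: since $Q_2$ and $\mathcal{D}$ each have three distinct edge-classes among the four of $\Z_3\P^1$, they share at least two, and by the first part of Lemma~\ref{L:mult} the corresponding edges of $Q_2$ coincide with two of $[0,e_1],[0,e_2],[0,e_1-e_2]$. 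As $Q_2$ is unimodular its third edge is then forced up to sign, and the finitely many resulting triangles are exactly $\mathcal{D}$, its point reflection $-\mathcal{D}$, and three triangles of the type of $T'$. The reflection is excluded because $\mathcal{D}-\mathcal{D}$ equals, up to translation, the zonotope $[0,e_1]+[0,e_2]+[0,e_1-e_2]$, so $L(\mathcal{D}+(-\mathcal{D}))\geq 3$; the three $T'$-type triangles reduce, via the $\A{2}$-symmetries of $\mathcal{D}$ that permute $[e_1],[e_2],[e_1-e_2]$, to the single computation $L(\mathcal{D}+T')\geq 3$ (here $\mathcal{D}+T'$ contains $[0,2]\times[0,1]=[0,e_1]+[0,e_1]+[0,e_2]$). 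All non-trivial cases thus contradict $L(Q_1+Q_2)=2$, leaving $Q_2=\mathcal{D}=\Delta$.

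The step I expect to be the main obstacle is precisely the elimination of these exceptional triangles, since Lemma~\ref{L:mult} by itself does not finish either part: both $T'$ (in the $T_0$ case) and, more subtly, the reflection $-\mathcal{D}$ (in the $\mathcal{D}$ case) are completely consistent with the class-matching of the Lemma --- the reflection even shares all three classes with $\mathcal{D}$ and has equal edges --- yet are not the desired summand. Disposing of them requires the explicit sub-Minkowski-sum computations that exhibit a length-three zonotope, and the real care is in verifying that the short list of exceptional unimodular triangles is complete, in particular in not overlooking the reflection case.
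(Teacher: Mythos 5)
Your strategy is the paper's own: reduce segment directions modulo $3$, apply Lemma~\ref{L:mult} to pairs of summands (your opening observation that $L(Q_i+Q_j)=2$ for any two summands is correct and is exactly what makes the lemma available), invoke Theorem~\ref{T:indecomp}, and kill the finitely many surviving triangles by exhibiting length-three zonotopes in their Minkowski sums. However, there is a genuine gap in your first part, and it is precisely the one your closing paragraph warns against. Lemma~\ref{L:mult} does \emph{not} force the surviving unimodular triangle to be $T'=\{(0,0),(1,0),(1,1)\}$: the triangle $T''=\{(0,0),(0,1),(1,1)\}$, which is a translate of $-T'$ but \emph{not} of $T'$, also has all three edges equal, up to translation, to the segments $[0,e_1]$, $[0,e_2]$, $[0,e_1+e_2]$ of $T_0$, so it passes the class-matching test just as well. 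Your elimination does not cover it: the rectangle $[1,2]\times[0,2]$ is not contained in $T_0+T''$ (for its corner $(2,0)$ one would need $p+q=(2,0)$ with $p\in T_0$, $q\in T''$; both polygons lie in $y\ge 0$, so $p=(1,0)$ and $q=(0,0)$, giving $(1,0)\ne(2,0)$). So, as written, the first statement is not proved. The fix is short: either check $T''$ directly --- the rectangle $[0,2]\times[1,2]$, a translate of $2[0,e_1]+[0,e_2]$, lies in $T_0+T''$ since its corners $(0,1)$, $(2,1)=(1,0)+(1,1)$, $(0,2)=(0,1)+(0,1)$, $(2,2)$ are all realized as sums --- or note that the swap $(x,y)\mapsto(y,x)$ preserves $T_0$ and interchanges $T'$ with $T''$, so $L(T_0+T'')=L(T_0+T')\ge 3$. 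This is exactly why the paper checks ``the standard 2-simplex and its reflections'' in every configuration rather than a single representative.

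The same undercount appears in your second part: there are six surviving $T'$-type triangles (three point-reflection pairs, with third edge of direction $(1,1)$, $(-2,1)$, or $(-1,2)$), not three, so that together with $\mathcal{D}$ and $-\mathcal{D}$ one recovers the paper's list of eight. There, however, the slip is harmless: the order-six symmetry group of $\mathcal{D}$, whose linear parts permute the classes $[e_1],[e_2],[e_1-e_2]$, genuinely acts transitively on all six (the swap already carries each such triangle to a translate of its point reflection), so your single computation $L(\mathcal{D}+T')\ge 3$ plus your clean identification of $\mathcal{D}+(-\mathcal{D})$ with the zonotope $[0,e_1]+[0,e_2]+[0,e_1-e_2]$ --- a nicer argument than the paper's unspecified ``easy check'' --- does close that part. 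Importing the analogous symmetry remark for $T_0$ into the first part is all that is missing.
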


\begin{proof}
We have four equivalence classes in  $\Z_3 \P^1$:
$$(1,1),  (1,-1), (1,0), (0,1).
$$ 
Notice that if $a$ and $b\in\Z_3\P^1$ are linearly independent (that is, $a\neq \pm b$), then they generate all the classes:
$$\langle a,b \rangle= \{ a, b,  a+b, a-b\}.$$
Now let one of the summands in a maximal decomposition in $P$ be $\A{2}$-equivalent to $T_0$. Then  we can assume this summand is exactly $T_0$. The direction vectors of  the lattice segments in $T_0$ are  $(1,0)$, $(0,1)$, $(1,1)$, $(1,-1)$, $(1, 2)$, and $(2,1)$. The last three are all from the same class, so by Lemma~\ref{L:mult} segments from this class cannot show up in other summands. The first three are all from distinct classes, hence by the lemma only  segments with direction vectors $(1,0)$, $(0,1)$, and $(1,1)$ can show up in other summands. Since all four classes are covered, we have shown that the direction vectors of lattice segments in other summands can only be   $(1,0)$, $(0,1)$, and $(1,1)$. One can use such segments to form a triangle in four different ways. The result will be the standard 2-simplex and its reflections. In each of these four cases, it's easy to check that the Minkowski sum of such a  triangle with $T_0$ is 3, which proves our first statement.

Next, let one of the summands be equivalent to  the standard 2-simplex $\Delta$, so we can assume it's exactly $\Delta$. The direction vectors $(1,0), (0,1), $ and $(-1,1)$ are all from distinct classes, hence if other summands have lattice segments from these classes,  they would have to have these direction vectors.   If there is another triangle in the maximal decomposition, it would have to be equivalent to the standard 2-simplex, as $T_0$ is not possible  by the above argument. The direction vectors would have to belong to three distinct classes, so at least two of the sides would have to  have direction vectors  $(1,0), (0,1),$ or $(-1,1)$.  Here are eight triangles that could be formed in this way: 

\begin{figure}[h]
\centerline{
 \scalebox{0.7}
 {
\input{eight.pstex_t}}}
\end{figure}
The last four have a segment with a direction vector either $(-2, 1)$ or $(-1, 2)$, which are from the same class with $(1,1)$, so the Minkowski sum of any of these 
triangles with $\Delta$ is at least 3. For all the remaining triangles, except $\Delta$ itself we easily check that their sum with $\Delta$ has Minkowski length 3.

\begin{figure}[h]
\centerline{
 \scalebox{0.7}
 {
\input{sumDelta.pstex_t}}}
\end{figure}
\end{proof}

\begin{corollary}\label{C:sharetriang}\ Let $P, Q$ be  lattice polytopes in $\R^3$  with $L(P+Q)=2$. Consider the intersection of a plane $\pi$  with each $P$ and $Q$.
If each $\pi\cap P$ and $\pi\cap Q$ contains  a lattice triangle, then 
these lattice triangles are the same up to translation.
\end{corollary}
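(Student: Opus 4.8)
The plan is to reduce the statement to the two-dimensional results already proved. Fix a lattice triangle $T_P\subseteq\pi\cap P$ and a lattice triangle $T_Q\subseteq\pi\cap Q$; it suffices to show that $T_P=T_Q$ up to translation. Since $T_P+T_Q\subseteq P+Q$, we have $L(T_P+T_Q)\le L(P+Q)=2$. On the other hand each triangle contains a primitive segment, so $L(T_P)\ge 1$ and $L(T_Q)\ge 1$; and if $L(T_P)\ge 2$, then concatenating a maximal decomposition of $T_P$ with a single primitive segment of $T_Q$ would give three non-trivial summands whose Minkowski sum lies in $T_P+T_Q$, contradicting $L(T_P+T_Q)\le 2$. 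Hence $L(T_P)=L(T_Q)=1$, and combining the two single-segment decompositions gives $L(T_P+T_Q)\ge 2$, so in fact $L(T_P+T_Q)=2$.

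Next I would carry out the passage from $\R^3$ down to the plane. Both $T_P$ and $T_Q$ lie in $\pi=\{x:\langle n,x\rangle=c\}$, where $n$ is a primitive normal vector, so their Minkowski sum lies in the parallel plane $\pi'=\{x:\langle n,x\rangle=2c\}$. Because $T_P$ is a genuine (two-dimensional) triangle, $\pi\cap\Z^3$ is a coset of the rank-$2$ lattice $\Lambda=\{v\in\Z^3:\langle n,v\rangle=0\}$, and $\pi'\cap\Z^3$ is a (nonempty, since it contains the lattice vertices of $T_P+T_Q$) coset of the same $\Lambda$. The key observation is that the ambient lattice $\Z^3$ and the plane lattice $\Lambda$ induce the same notion of Minkowski length for any lattice polytope sitting inside $\pi'$: a lattice segment whose direction lies in the $2$-dimensional direction space of $\pi$ is $\Z^3$-primitive if and only if it is primitive for $\Lambda$, since $\Lambda$ is the full (saturated) sublattice of $\Z^3$ in that plane; and in any maximal decomposition of a polytope contained in the planar $R:=T_P+T_Q$ all summands may be translated into $\pi'$, their edge directions necessarily lying in that $2$-plane (otherwise the Minkowski sum would be $3$-dimensional rather than contained in $R$). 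After fixing a lattice isomorphism $\Lambda\cong\Z^2$, the objects $T_P$, $T_Q$, $R$ become honest lattice polytopes in $\Z^2$ with $L(T_P)=L(T_Q)=1$ and $L(R)=2$.

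It then remains to invoke the planar results. By Theorem~\ref{T:indecomp}, each of $T_P,T_Q$, being a two-dimensional polygon of Minkowski length $1$, is $\A2$-equivalent to the standard simplex $\mathcal{D}$ or to $T_0$. Viewing $\{T_P,T_Q\}$ as a maximal decomposition of $R$, the preceding theorem shows that a summand $\A2$-equivalent to $T_0$ forces every other summand to be a primitive segment; since $T_Q$ (resp.\ $T_P$) is a triangle, neither $T_P$ nor $T_Q$ can be $\A2$-equivalent to $T_0$. Thus both are $\A2$-equivalent to $\mathcal{D}$, and the second part of the preceding theorem --- that a triangle summand appearing alongside a copy of $\mathcal{D}$ must equal that copy --- yields $T_P=T_Q$ up to translation, which is the assertion.

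I expect the main obstacle to be the middle step: justifying cleanly that primitivity of segments and the value of $L$ are unchanged when one restricts attention from $\Z^3$ to the saturated plane lattice $\Lambda$, so that Theorem~\ref{T:indecomp} and the preceding theorem --- both stated over $\Z^2$ --- may legitimately be applied to $T_P$, $T_Q$ and $R$. Once that identification is in place, the remaining argument is a routine bookkeeping of Minkowski lengths together with a citation of the planar classification.
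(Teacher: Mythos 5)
Your proposal is correct and takes essentially the same approach as the paper: reduce to the plane and invoke Theorem~\ref{T:indecomp} together with the preceding theorem on planar maximal decompositions. The paper compresses your middle step by choosing a matrix $A\in\GL(3,\Z)$ whose last row is the primitive normal $u$ of $\pi$; such an $A$ maps $\pi$ onto a horizontal lattice plane while preserving $\Z^3$, so the point you flag as the main obstacle --- that primitivity and Minkowski length are unchanged when passing to the saturated plane lattice $\Lambda\cong\Z^2$ --- becomes automatic.
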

\begin{proof}
Let $u$ be a primitive normal vector to  $\pi$. Let $A\in {\rm GL}(3,\Z)$ be a matrix whose last row is $u$.  (It is shown, for example, in \cite{Newman}, Theorem II.1,  why such a matrix exists.)
Then $A$ maps $\pi$ to the $(x,y)$-plane and the result follows from the previous theorem.
\end{proof}

%
%
\section{Algorithm for Computing $L(P)$ for 3D polytopes.}

It  was shown in \cite{SS1} that in the plane case there always exists a maximal decomposition in $P$ of a very simple form. Namely, there exists a maximal decomposition that is  equivalent to $n_1[0,e_1]+n_2[0,e_2]+n_3[0,e_1+e_2]$ for some $n_1, n_2, n_3\in \N$.  This fact was used in \cite{SS1} to build an algorithm for finding $L(P)$.   To extend this result to the 3D case, we  first make a definition.

\begin{definition}
Let $P\subset\R^n$ be a convex lattice polytope. Then the set of its maximal decompositions is partially ordered by inclusion. That is, we say that 
$$A+P_1+\cdots+P_k< B+Q_1+\cdots+Q_l$$ 
if  $A+P_1+\cdots+P_k\subsetneq B+Q_1+\cdots+Q_l$. Here $A$ and $B$ are points in $\Z^n$. A maximal decomposition is called a {\it smallest maximal decomposition} if it is minimal with respect to
this partial order. Note that a smallest maximal decomposition is a Minkowski sum of segments.

\end{definition}

\begin{proposition}\label{T:2d1}
Let $P\subseteq \R^2$ be a lattice polygon. Consider a smallest maximal  decomposition $Z$ in $P$: 
$$P\supseteq Z = A+ n_1 E_1+\cdots+n_l E_l.
$$ 
 Then ${\rm Area}(E_i+E_j)\leq 1$ for any choice of $1\leq i,j\leq l$.
 \end{proposition}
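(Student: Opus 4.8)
The plan is to argue by contradiction from the minimality of $Z$. Suppose that some pair satisfies ${\rm Area}(E_i+E_j)=|\det(v_i,v_j)|=m\ge 2$, where $v_i,v_j$ are the primitive direction vectors of $E_i,E_j$; note that since $E_1,\dots,E_l$ are the \emph{distinct} directions of $Z$, for $i\ne j$ these are non-parallel, so $m\ge 1$ automatically and we are trying to rule out $m\ge 2$ (for $i=j$ the area is $0$ and there is nothing to prove). The idea is to produce a maximal decomposition strictly contained in $Z$ by finding, inside the parallelogram $\Pi=E_i+E_j$, a strictly smaller sub-decomposition of the same Minkowski length, and then adding back the remaining summands of $Z$.

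The heart of the matter is the following claim: \emph{a lattice parallelogram $\Pi$ with primitive edge vectors and area $m\ge 2$ contains a lattice segment of lattice length at least $2$}, i.e.\ three collinear lattice points. To prove it, note first that by Pick's formula $\Pi$ has $I=m-1\ge 1$ interior lattice points (here $B=4$ since the edges are primitive), so fix one, $p$. Write the vertices of $\Pi$ as $0,v_i,v_j,v_i+v_j$ and $p=\alpha v_i+\beta v_j$ with $\alpha,\beta\in(0,1)$, and reflect a vertex $V$ through $p$: the point $2p-V$ is again a lattice point, and a direct check of the four cases shows that for at least one vertex $V$ one has $2p-V\in\Pi$, since the four regions $\{\alpha\le\tfrac12,\beta\le\tfrac12\}$, $\{\alpha\ge\tfrac12,\beta\le\tfrac12\}$, $\{\alpha\le\tfrac12,\beta\ge\tfrac12\}$, $\{\alpha\ge\tfrac12,\beta\ge\tfrac12\}$ (corresponding to $V=0,v_i,v_j,v_i+v_j$) cover the open unit square. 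Then $V,\,p,\,2p-V$ are three distinct collinear lattice points, so the segment $S=[V,\,2p-V]\subseteq\Pi$ has lattice length $\ge 2$.

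Granting the claim, $S$ is a Minkowski sum of $k\ge 2$ parallel primitive segments. I would replace one copy of $E_i$ and one copy of $E_j$ in $Z$ by these $k$ segments. Since $S\subseteq\Pi=E_i+E_j$, monotonicity of the Minkowski sum under inclusion yields a collection $Z'$ of $(L-2)+k\ge L$ primitive segments with $Z'\subseteq Z\subseteq P$; maximality of $L$ forces $k=2$, so $Z'$ is again a maximal decomposition. Finally $Z'\subsetneq Z$: indeed $S\subsetneq\Pi$ (it is one-dimensional while $\Pi$ has positive area), and for convex bodies $B\subsetneq B'$ implies $C+B\subsetneq C+B'$ (compare support functions, using $h_{C+B}=h_C+h_B$, so $C+B=C+B'$ would force $B=B'$), so adding back the remaining summands preserves strict inclusion. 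This contradicts minimality of $Z$, and hence ${\rm Area}(E_i+E_j)\le 1$.

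I expect the main obstacle to be the collinearity claim: the naive attempt to fit a \emph{unimodular} (area $1$) sub-parallelogram inside $\Pi$ fails in general (e.g.\ an area-$2$ parallelogram with primitive edges contains no unit parallelogram at all), so one is forced into a lower-dimensional replacement, and the reflection-through-an-interior-point argument is what makes such a long segment available. The only other point requiring care is the strictness of the inclusion after taking Minkowski sums, which the support-function remark settles cleanly.
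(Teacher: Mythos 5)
Your proof is correct, and while its outer shell is the same as the paper's (derive a contradiction with minimality by replacing the pair $E_i,E_j$ inside $Z$ with a doubled segment contained in $E_i+E_j$), the key step is handled by a genuinely different argument. The paper normalizes by an affine unimodular map so that $v_1=e_1$ and $v_2=(a,b)$ with $0\le a<b$, $b\ge 2$, and then writes down an explicit segment of lattice length $2$ strictly inside $\Pi$ in four parity cases for $(a,b)$; strictness of the final inclusion is then immediate from the construction, and no counting or cancellation lemma is needed. You instead work coordinate-free: Pick's theorem (with $B=4$ since the edges are primitive) guarantees an interior lattice point $p$ of $\Pi$, and reflecting a suitable vertex $V$ through $p$ produces three collinear lattice points $V,p,2p-V$ in $\Pi$, hence a segment of lattice length $k\ge 2$. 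This buys conceptual clarity and avoids case analysis (it also explains \emph{why} the segment exists, and the reflection trick generalizes more readily than parity bookkeeping), but it costs you two extra observations the paper never needs: that maximality of $L$ forces $k=2$, and that strict inclusion survives adding back the remaining summands, which you settle correctly via support-function cancellation ($h_{C+B}=h_C+h_B$, so $C+S=C+\Pi$ would force $S=\Pi$). Both proofs are complete; yours trades the paper's explicitness for a cleaner, normalization-free existence argument.
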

\begin{proof}
Let $v_1$ and $v_2$ be the primitive direction vectors of the segments  $E_i$ and $E_j$ and assume that the area of the parallelogram spanned by $v_1$ and $v_2$ is at least 2. Applying an $\A{2}$ transformation, we can assume that $A$ is the origin, $v_1=e_1=(1,0)$ and $v_2=(a,b)$, where $0\leq a <b$ and $b>1$, which implies that $(1,1)\in\Pi=[0,e_1]+[0,(a,b)]$.   We show now that there is always a segment $I$ of Minkowski  length 2 that lies strictly inside $\Pi$ and hence, we can pass from $\Pi$ to  $2I$ and get a smaller maximal decomposition.
 If both $a$ and $b$ are even then 
$2[0,(a/2,b/2)]$ is strictly inside of $\Pi$;  if $a$  is odd and $b$ is even then 
$2[0,((a+1)/2,b/2)]\subsetneq \Pi$; if $a$ is even and $b$ is odd, $(1,1)+2[0,(a/2,(b-1)/2)]\subsetneq \Pi$; if $a$ and $b$ are both odd, $(1,1)+2[0,((a-1)/2),(b-1)/2)]\subsetneq \Pi$. 
\end{proof}

\begin{theorem}\label{T:2d2}
Let $P\subseteq \R^2$ be a lattice polygon. If $Z$ is a smallest maximal decomposition in $P$, then it is $\A{2}$-equivalent to
$$P\supseteq Z = n_1 [0,e_1]+n_2[0,e_2]+n_3 [0,e_1+e_2].
$$
 \end{theorem}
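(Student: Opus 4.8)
The plan is to start from a smallest maximal decomposition
$$Z = n_1 E_1 + \cdots + n_l E_l,$$
where the $E_i$ are primitive segments with distinct direction classes, and use Proposition~\ref{T:2d1} as the central tool. By Proposition~\ref{T:2d1}, every pair $E_i, E_j$ satisfies $\mathrm{Area}(E_i + E_j) \leq 1$; since the $E_i$ are primitive segments in distinct directions, this forces $\mathrm{Area}(E_i + E_j) = 1$, meaning the parallelogram they span is \emph{unimodular}. So the core claim is: if $v_1, \dots, v_l$ are primitive vectors in $\Z^2$ representing distinct directions, any two of which span a unimodular parallelogram, then $l \leq 3$ and the set of directions is $\A{2}$-equivalent to $\{e_1, e_2, e_1 + e_2\}$.

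First I would apply an $\A{2}$ transformation so that $E_1 = [0, e_1]$ and $E_2 = [0, e_2]$; this is possible because $E_1, E_2$ span a unimodular parallelogram, so the matrix sending their direction vectors to $e_1, e_2$ lies in $\GL(2,\Z)$. Now suppose $E_3$ has primitive direction $(a,b)$. The unimodularity condition against $e_1$ gives $|b| = 1$, and against $e_2$ gives $|a| = 1$, so (up to sign) $(a,b) \in \{(1,1), (1,-1)\}$. The direction $(1,-1)$ is $\A{2}$-equivalent to the configuration $\{e_1, e_2, (1,-1)\}$, which is itself equivalent to $\{e_1, e_2, e_1 + e_2\}$ after a reflection, so without loss of generality $E_3 = [0, e_1 + e_2]$. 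This shows we can always realize up to three of the directions as exactly $e_1, e_2, e_1 + e_2$.

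Next I would rule out a fourth distinct direction. Any candidate $E_4 = [0,(a,b)]$ must pair unimodularly with $e_1$ (forcing $|b|=1$), with $e_2$ (forcing $|a|=1$), and with $e_1+e_2$ (forcing $|a - b| = 1$). The only primitive vectors satisfying the first two conditions are $\pm(1,1)$ and $\pm(1,-1)$, and both of these have already appeared among $\{e_1, e_2, e_1+e_2\}$ up to the identifications, so no genuinely new direction survives. This bounds $l \leq 3$ and pins down the directions exactly, giving the stated normal form $Z = n_1[0,e_1] + n_2[0,e_2] + n_3[0,e_1+e_2]$ after absorbing the translation.

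The main obstacle I anticipate is bookkeeping the sign ambiguities and the choice of which primitive direction gets normalized to which basis vector, since a direction class in $\Z^2$ is only defined up to sign and the transformation fixing $e_1, e_2$ still permits reflections like $(a,b) \mapsto (a,-b)$. One must check that after fixing $E_1, E_2$ these residual symmetries genuinely identify $(1,-1)$ with the $e_1 + e_2$ configuration rather than producing a fourth inequivalent case. A clean way to handle this is to argue invariantly in terms of the four classes of $\Z_3\P^1$ as in Lemma~\ref{L:mult}, noting that pairwise unimodularity already forces distinct classes and that $\Z_3\P^1$ has exactly four elements, which immediately caps the count and matches the $\{e_1, e_2, e_1+e_2\}$ pattern; the slightly delicate point is translating ``unimodular over $\Z$'' into the mod-$3$ language cleanly, but Proposition~\ref{T:2d1} does the heavy lifting there.
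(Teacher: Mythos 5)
Your proposal is correct and takes essentially the same route as the paper's proof: normalize two of the directions to $e_1,e_2$ using the unimodularity guaranteed by Proposition~\ref{T:2d1}, deduce that any further primitive direction is $(1,\pm 1)$ up to sign, rule out the coexistence of $(1,1)$ and $(1,-1)$, and absorb the residual reflection to reach the normal form (the paper excludes coexistence by noting that $[0,(1,1)]+[0,(1,-1)]$ contains the non-primitive segment $2[0,e_1]$, you by the failed unimodular pairing $\det\bigl((1,1),(1,-1)\bigr)=-2$ --- these are the same minimality violation). One wording slip worth fixing: $(1,-1)$ has \emph{not} ``already appeared among $\{e_1,e_2,e_1+e_2\}$ up to the identifications'' (once $E_3$ is pinned to $e_1+e_2$ no further reflection is available); it is excluded precisely by your stated third condition $|a-b|=1$, so the conclusion stands but the justification should cite that condition rather than a prior appearance.
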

\begin{proof}
Let  $P\supseteq Z = n_1 E_1+\cdots+n_l E_l$ with $v_1,\dots, v_l$  distinct primitive direction vectors of the segments $E_1,\dots, E_l$. Applying an $\A{2}$ transformation we can assume that $v_1=e_1$. Next, since $\det(v_1,v_2)=\pm 1$, we can assume that $v_2=e_2$. Then by the previous proposition, any other $v_k$ is either  $(1,1)$ or $(1,-1)$ as we can always switch a vector to its negative. Notice that  these two vectors cannot simultaneously appear in a smallest decomposition, as  the sum of the corresponding segments would contain a segment of Minkowski length 2.  Finally, these two remaining cases are $\A{2}$-equivalent.  
\end{proof}

We next treat the 3D case. 

\begin{proposition}\label{T:3d1}
Let $P\subset\R^3$ be a  lattice polytope. Consider a smallest maximal decomposition $Z$ in  $P$
$$P\supseteq Z = A+n_1 E_1+\cdots+n_l E_l.
$$ 
Then  ${\rm Vol}(E_i+E_j+E_k)\leq 2$ for any choice of $1\leq i,j,k\leq l$.
\end{proposition}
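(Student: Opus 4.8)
The plan is to mimic the 2D argument from Proposition \ref{T:2d1}: assume for contradiction that the parallelepiped $\Pi = [0,v_1]+[0,v_2]+[0,v_3]$ spanned by three of the primitive direction vectors has volume at least $3$, and then exhibit a segment $I$ of Minkowski length $2$ lying strictly inside $\Pi$. If such an $I$ exists, we can replace the three segments $E_i,E_j,E_k$ (whose sum is $\Pi$) by $2I$, obtaining a strictly smaller maximal decomposition and contradicting minimality of $Z$. So the entire problem reduces to a lattice-point statement about parallelepipeds.

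First I would normalize. By an $\A3$ transformation I can assume $A$ is the origin, $v_1 = e_1 = (1,0,0)$, and since $\det(v_1,v_2)$ has a unimodular $2\times 2$ minor I can further arrange $v_2 = (0,1,0)$; then $v_3 = (a,b,c)$ with $c = {\rm Vol}(\Pi)\geq 3$ (after reducing $a,b$ modulo the lattice spanned by $v_1,v_2$, so $0\le a,b < c$). The goal becomes: find a lattice point $w$ strictly in the interior of $\Pi$ such that $2[0,w]$ (suitably translated) sits strictly inside $\Pi$, i.e.\ a primitive-free doubled segment contained in the open parallelepiped. The natural candidate is a ``halving'' vector $w \approx \tfrac12 v_3$, chosen so that $2w$ differs from $v_3$ by a short correction lying in the $e_1,e_2$ plane; the interior points $w$ and $v_3 - w$ then give the two lattice points of a length-$2$ segment.

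The key step is the explicit case analysis on the parities and residues of $a,b,c$, exactly as in the 2D proof but now with one more coordinate. Concretely I would look for integers so that both $w$ and $2w$ land strictly between the bounding faces of $\Pi$; since $c\ge 3$, there is genuine room in the third coordinate for $w_3 = \lfloor c/2\rfloor$ or $\lceil c/2\rceil$ to satisfy $0 < w_3 < c$ and $0 < c - w_3 < c$, and the first two coordinates can be adjusted by translating by a lattice vector in the $e_1,e_2$-plane (which does not change containment in $\Pi$ modulo that plane) to keep $w$ and $v_3 - w$ inside the slab. I would split into the cases $c$ even versus $c$ odd, and within each subdivide according to whether $a$ and $b$ are even or odd, producing in each case an explicit $w$ and verifying $2[0,w]$ (or a translate) $\subsetneq \Pi$.

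The main obstacle I anticipate is the geometry of the ``sides'': unlike the 2D case, where strict containment only had to be checked against two pairs of edges, here $\Pi$ is a slanted parallelepiped and I must ensure the doubled segment stays off all three pairs of facets simultaneously. The third coordinate is easy because $c\ge 3$ gives slack, but controlling the first two coordinates after doubling requires that the correction vector $2w - v_3 \in \spn(e_1,e_2)$ be chosen short enough that translating keeps everything interior; this is where the parity bookkeeping does its real work. I expect that $c\ge 3$ is exactly the threshold that makes enough lattice points available in the interior slab, paralleling how $b>1$ was the threshold in the planar proof, and that the volume bound of $2$ (rather than $1$) reflects the extra freedom of the third dimension.
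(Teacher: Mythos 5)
Your reduction breaks down at the very first step, and the failure is one of counting, not of lattice geometry. In the planar proof, $\Pi=E_i+E_j$ contributes \emph{two} primitive segments to the decomposition, and the replacement (a doubled primitive segment, Minkowski length $2$) also contributes two, so the new decomposition is still maximal and strictly smaller; that is where the contradiction comes from. In your 3D version, $\Pi=E_i+E_j+E_k$ contributes \emph{three} segments to the count, but your proposed replacement $2[0,w]$ contributes only two. The resulting decomposition has $L(P)-1$ summands, hence is not a maximal decomposition at all, and the fact that it is properly contained in $Z$ contradicts nothing: a smallest maximal decomposition is minimal with respect to inclusion \emph{among maximal decompositions} only. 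So exhibiting a doubled segment strictly inside $\Pi$ (which does exist when the volume is at least $3$, but is beside the point) proves nothing by itself, and all of your parity bookkeeping is aimed at a target that is too weak.

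What the argument actually requires is a Minkowski sum of \emph{three} nontrivial segments strictly inside $\Pi$, and this is what the paper produces: after the same normalization you use ($v_1=e_1$, $v_2=e_2$, $v_3=(s,t,u)$ with $0\le s\le t<u$, $u\ge 3$, and the case $s=0$ handled separately via the 2D result), it finds in each case a (possibly degenerate) parallelogram of the form $F+2[0,w]+[0,w']$: namely $(1,1,0)+2[0,(0,0,1)]+[0,(s-1,t-1,u-2)]$ when $s\le u/2$, $t\le u/2$; $2[0,(1,1,1)]+[0,(s-1,t-1,u-2)]$ when $s>u/2$, $t>u/2$; and $(1,0,0)+2[0,(0,1,1)]+[0,(s-1,t-1,u-2)]$ when $s\le u/2$, $t>u/2$. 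Two points are worth noting against your plan: the case split that works is on the size of $s$ and $t$ relative to $u/2$ (so that all four vertices of the parallelogram land inside the slanted parallelepiped), not on parities of the coordinates as in 2D; and the hypothesis $u\ge 3$ is used precisely to guarantee that the third segment $[0,(s-1,t-1,u-2)]$ is nontrivial, so the replacement again has Minkowski length at least $3$ and the new decomposition is still maximal. If you rework your write-up along these lines, keeping your normalization but hunting for such a three-segment parallelogram instead of a doubled segment, you recover the paper's proof.
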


\begin{proof}
Let $v_1$, $v_2$, and $v_3$ be the primitive vectors that go along the segments  $E_i$, $E_j$, $E_k$, and assume that the volume of the parallelepiped spanned by $v_1$, $v_2$, and $v_3$ is at least 3. Applying an $\A{2}$ transformation (and using Proposition~\ref{T:2d1}), we can assume that $A$ is the origin, $v_1=e_1=(1,0,0)$, $v_2=e_2=(0,1,0)$,  and $v_3=(s,t,u)$, where $0\leq s \leq t <u$.  The volume of the parallelepiped spanned by $e_1,e_2,v_3$ is $|u|$. If $s=0$, then the area spanned by $e_2$ and $v_3$ is $|u|\geq 3$, which would contradict the minimality of  $Z$. We next  observe that the parallelepiped spanned by $e_1,e_2$, and $v_3$ is defined by
$$\Pi=\left\{(x,y,z) \in \R^3\mid 0 \leq z \leq u,\  \frac{s}{u} z \leq x \leq \frac{s}{u} z + 1, \  \frac{t}{u} z \leq y \leq \frac{t}{u} z + 1\right\}
$$
and consider the following three cases.
\begin{enumerate}
\item[Case 1.] $s\leq u/2$, $t\leq u/2$ \\
Using the description of $\Pi$ above, we can easily check that $(1,1,2)$ and $(s,t,u-2)$ are both in $\Pi$.  Hence a parallelogram with the vertices
$(1,1,0), (1,1,2), (s,t, u-2)$, and  $(s, t, u)$ is inside $\Pi$. This  parallelogram is a Minkowski sum of three segments
$$(1,1,0)+2[0,(0,0,1)]+[0,(s-1,t-1,u-2)]\subsetneq \Pi ,$$
 which contradicts the minimality of $Z$. Notice that $u\geq 3$ ensures that the segments involved in the decomposition are non-trivial.
\item[Case 2.] $s> u/2$, $t>u/2$ \\
Then $(2,2,2)$ and $(s-1,t-1,u-2)$ are in $\Pi$, so a parallelogram with the vertices  $(0,0,0), (2,2,2), (s-1,t-1,u-2)$, and  $(s+1, t+1, u)$ is inside $\Pi$.
This parallelogram   is a Minkowski sum of three segments
$$2[0, (1,1,1)]+[0, (s-1,t-1,u-2)]\subsetneq\Pi,$$ which contradicts the minimality of $Z$.
\item[Case 3.] $s\leq u/2$, $t>u/2$ \\
 Then $(1,2,2)$ and $(s,t-1,u-2)$ are in $\Pi$,  so a parallelogram with the vertices  $(1,0,0), (1,2,2), (s,t-1,u-2)$, and  $(s, t+1, u)$ is inside $\Pi$.
 This parallelogram   is a Minkowski sum of three segments
 $$(1,0,0)+2[0, (0,1,1)]+[0, (s-1,t-1,u-2)]\subsetneq\Pi,$$ 
 and we get the same contradiction again.
\end{enumerate}
\end{proof}

\begin{theorem}\label{T:3d2}
Let $P\subset \R^3$ be a lattice polytope. Let  $Z$  be a smallest maximal decomposition in $P$, then it is $\A{2}$-equivalent  to either
$$n_1[0, e_1] + n_2[0, e_2] + n_3[0, e_1+e_2+2e_3] + n_4[0, e_1+ e_2+e_3] + n_5[0, e_1 + e_3]
+n_6[0, e_2 + e_3] + n_7[0, e_3]
$$
or
$$n_1 [0,e_1] + n_2 [0,e_2] + n_3 [0,e_3] + n_4 [0,e_1 + e_2 + e_3]  + n_5 [0,e_1 \pm e_2]  + n_6 [0,e_1+e_3]  + n_7 [0,e_2 + e_3].
$$
\end{theorem}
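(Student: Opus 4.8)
The plan is to analyze the set of primitive direction vectors $\{v_1,\dots,v_l\}$ of the segments in the smallest maximal decomposition $Z = A + n_1E_1+\cdots+n_lE_l$ (recall a smallest maximal decomposition is always a sum of primitive segments). If $Z$ is not full-dimensional it lies in a plane and the conclusion follows from Theorem~\ref{T:2d2} with the multiplicities of the absent directions set to $0$, so I would assume $\dim Z = 3$. Two structural constraints drive everything. First, a pairwise constraint: for any two directions $v_i,v_j$ the sum $E_i+E_j$ is a planar piece of $Z$ inside $\pi=\spn(v_i,v_j)$, and if its lattice area in $\pi$ were $\ge 2$, the argument of Proposition~\ref{T:2d1} (carried out inside $\pi$ after mapping $\pi$ to a coordinate plane by an element of $\A{3}$, exactly as in Corollary~\ref{C:sharetriang}) would yield a strictly smaller decomposition, contradicting minimality; hence $v_i,v_j$ form a basis of the rank-two lattice $\Z^3\cap\pi$. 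Second, the triple constraint of Proposition~\ref{T:3d1}: any three independent directions span a parallelepiped of volume $\le 2$.

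I would then split on whether a volume-two triple occurs. \emph{Case A (every independent triple has volume $1$).} Pick an independent triple; being a $\Z^3$-basis it becomes $e_1,e_2,e_3$ after an $\A{3}$-transformation. For a further direction $w=(a,b,c)$ the volumes of $\{e_2,e_3,w\}$, $\{e_1,e_3,w\}$, $\{e_1,e_2,w\}$ are $|a|,|b|,|c|$, all $\le 1$, so $w\in\{-1,0,1\}^3$. The pairwise condition then pins down signs: once the diagonal $(1,1,1)$ is present, each of $(1,0,-1)$, $(0,1,-1)$, $(1,-1,0)$ would form a volume-two triple with one of $e_1,e_2,e_3$, leaving only $e_1+e_3,\,e_2+e_3,\,e_1+e_2$; and two opposite face diagonals such as $(1,1,0),(1,-1,0)$ cannot coexist, since they span a parallelogram of area $2$. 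This leaves exactly the direction list of the second form, giving Form~2.

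\emph{Case B (some triple has volume $2$).} I would normalize this triple: two of its vectors are a basis of their plane (pairwise constraint), hence extend to a $\Z^3$-basis, so I take them to be $e_1,e_2$; the third is $(s,t,u)$ with $|u|=2$, and reducing $s,t$ modulo $e_1,e_2$ while imposing the pairwise area-one conditions with $e_1$ and $e_2$ forces $s,t$ odd, so the triple becomes $e_1,\,e_2,\,e_1+e_2+2e_3$. For a further direction $w=(a,b,c)$ the three volume-$\le2$ inequalities read $|c|\le2$, $|c-2b|\le2$, $|2a-c|\le2$, which together with coprimality confine $w$ to a small finite set; testing these candidates against the pairwise constraint with $e_1+e_2+2e_3$ (which, for instance, rules out $(1,1,0)$ and $(1,-1,0)$) leaves precisely $e_3,\,e_1+e_3,\,e_2+e_3,\,e_1+e_2+e_3$, i.e. Form~1.

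The main obstacle I anticipate is making the two enumerations genuinely exhaustive and self-consistent: beyond checking that each admissible $w$ lies in the claimed list, I must verify that the admissible directions are \emph{mutually} compatible, so that all pairwise and triple constraints among the non-basis vectors hold simultaneously and no combination is secretly forbidden, and I must track the residual $\A{3}$- and sign-freedom carefully enough to justify a single normalized representative per form — in particular the $e_1\pm e_2$ ambiguity in Form~2, which survives because flipping the sign of a coordinate acts on several face diagonals at once. A secondary technical point, which I would dispatch once at the start, is the rigorous transfer of the planar Proposition~\ref{T:2d1} to an arbitrary lattice plane in $\R^3$ via the matrix with prescribed last row used in Corollary~\ref{C:sharetriang}.
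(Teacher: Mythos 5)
Your proposal is correct and follows essentially the same route as the paper's proof: reduce to the planar case, combine the pairwise area-one consequence of Proposition~\ref{T:2d1} with the volume-$\leq 2$ bound of Proposition~\ref{T:3d1}, split on whether a volume-two triple of directions occurs, normalize that triple to $e_1,e_2,e_1+e_2+2e_3$ (respectively $e_1,e_2,e_3$), and then enumerate and prune the remaining admissible directions by the same determinant and primitivity constraints. The one step the paper carries out that your sketch only flags as an anticipated obstacle is the sign normalization in the case where no vector of type $(\pm 1,\pm 1,\pm 1)$ occurs: the paper handles it with diagonal $\pm 1$ basis changes, showing that all but at most one face diagonal can be made positive, and this is precisely where the $e_1-e_2$ alternative in the second form survives.
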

\begin{proof}  Assume first that there are three segments in the maximal decomposition $Z$ whose direction vectors generate a parallelepiped of volume  2.
We can then assume that the first direction  vector is $e_1$. By Proposition \ref{T:2d1} we can assume that the second vector is $e_2$. Next, we can assume that the third direction vector $v$ is of the form $(s,t,u)$ where $0 \leq s \leq t <u$.  Since $2=|\det(e_1,e_2,v)|$, we know that $u=2$ and the only options for the third vector 
are $(0,1,2)$ and  $(1,1,2)$. The first of these two options is impossible, as the sum of $(0,1,2)$ and $(0,1,0)$ is not primitive,  which contradicts the minimality of $Z$. We have shown that the third vector is $(1, 1, 2)=e_1+e_2+2e_3$. 

Let  $v=(a,b,c)$ be a direction vector of some other segment in the maximal decomposition $Z$.  We know that $|\det(e_1, e_2, v)|\leq 2$, $|\det(e_1, e_1+e_2+2e_3, v)|\leq 2$, and $|\det(e_2,e_1+e_2+2e_3,  v)|\leq 2$, which gives us the following restraints on the components of $v$:  $|c - 2 b| \leq 2$,  $|c - 2 a| \leq 2$, and $|c|\leq 2$. By flipping  the direction vector $v$ if necessary, we can assume that $c\geq 0$.

If $c=0$, then $v=(1,1,0)$ or $(1,-1,0)$. Both options are impossible as then the sum of $v$ with $(1,1,2)$ is $(2,2,2)$ or $(2,2,0)$, so we can pass to a  smaller maximal  decomposition.  If $c=1$, then $v=(0,0,1)$, $(0,1,1)$, $(1,0,1)$, $(1,1,1)$.
If $c=2$, then $v=(0,1,2)$, $(1,0,2)$, $(1,2,2)$, or $(2,1,2)$. Adding either $e_1$ or $e_2$ to each of these four vectors we can get a non-primitive vector,
so none of these vectors occur in our maximal decomposition. We have shown that in the case when there are three segments in the maximal decomposition $Z$  that generate a parallelepiped of volume  2, then $Z$ is $\A{2}$-equivalent to 
$$n_1[0, e_1] + n_2[0, e_2] + n_3[0, e_1+e_2+2e_3] + n_4[0, e_1+ e_2+e_3] + n_5[0, e_1 + e_3]
+n_6[0, e_2 + e_3] + n_7[0, e_3].
$$

Next, assume that any three segments in the maximal decomposition $Z$ generate a parallelepiped of volume at most 1. 
If for any three vectors the volume is zero, then we are in the plane case and we are done. Otherwise, we can assume that 
first three vectors are $e_1$, $e_2$, and $e_3$. Let $v=(a,b,c)$ be any other direction vector in the maximal decomposition $Z$.
Then we have $|a|\leq 1$, $|b|\leq 1$ and $|c|\leq 1$. By flipping the direction vector we can assume that $c\geq 0$.
Here are the options for $v$ that we get, written in four lines: 
$$(1,1, 0), (1,-1, 0),$$
$$(0,1,1), (0,-1,1),$$
$$(1, 0, 1), (-1,0,1),$$
$$(1, 1,1), (1,-1,1), (-1,1,1), (-1,-1,1).$$
Notice that no two vectors from the same line here can occur in $Z$ together as their sum is not primitive, which would contradict the minimality of $Z$.
By flipping the direction of basis vectors, we can assume that if any of the four vectors in the last line occur in $Z$, then it is $(1,1,1)$.  Let's assume that this is the case and $(1,1,1)$ occurs in $Z$.

We notice next $(-1,0,1)$ and $(0,-1, 1)$ can not occur in $Z$ together as if we add these two vectors together with $(1,1,1)$, we get $(0,0,3)$.
We can make the same observation about $(-1,0,1)$ and $(1,-1,0)$ and then about $(0,-1,1)$ and $(1,-1,0)$. This implies that only one of $(1,-1,0)$, $(0,-1,1)$,
and $(-1,0,1)$ occurs in $Z$. By permuting $e_1,e_2$, and $e_3$ we can assume that the one that occurs is $(1,-1,0)$.

In the case when none of of the four vectors $(1, 1,1), (1,-1,1), (-1,1,1), (-1,-1,1)$ occur in $Z$, by applying a diagonal change of basis  with $\pm 1$'s  on the main diagonal (which will not change $[0,e_1], [0,e_2]$, $[0,e_3]$), we can  turn any pair of vectors from the set  $(1,-1,0), (0,-1,1), (-1,0,1)$ into corresponding vectors 
with positive entries. For example, a matrix with the diagonal entries $-1,-1,1$ will turn $(-1,0,1)$ and $(0,-1,1)$ into $(1,0,1)$ and $(0,1,1)$.  Hence we will be able to get  rid of all the vectors with negative entires except, possibly, one. By permuting $e_1$, $e_2$, and $e_3$, we can assume that the vector with a negative entry is $(1,-1,0)$. We have shown that if any three segments in  $Z$ generate a parallelepiped of volume at most 1, then $Z$ is $\A{3}$-equivalent to either 
$$n_1 [0,e_1] + n_2 [0,e_2] + n_3 [0,e_3] + n_4 [0,e_1 + e_2 + e_3]  + n_5 [0,e_1+ e_2]  + n_6 [0,e_1+e_3]  + n_7 [0,e_2 + e_3]
$$
or
$$n_1 [0,e_1] + n_2 [0,e_2] + n_3 [0,e_3] + n_4 [0,e_1 + e_2 + e_3]  + n_5 [0,e_1-e_2]  + n_6 [0,e_1+e_3]  + n_7 [0,e_2 + e_3].
$$
\end{proof}

Notice that in 2D a smallest maximal decomposition has at most 3 distinct summands; in 3D, as we have just shown, such a decomposition has at most 7 distinct summands. It turns out that in dimension $n$ a smallest maximal decomposition  has at most $2^{n}-1$ distinct summands.

\begin{proposition}\label{P:bound} Let $P\subset\R^n$ be a convex lattice  polytope. Let $Z$ be a smallest maximal decomposition in $Z$. 
Then $Z$ has at most $2^n-1$ distinct summands.
\end{proposition}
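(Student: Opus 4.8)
The plan is to reduce the primitive direction vectors of the summands modulo $2$ and count. Write the smallest maximal decomposition as
$$
Z = A + n_1 E_1 + \cdots + n_l E_l,
$$
where $E_1,\dots,E_l$ are primitive segments with distinct direction vectors $v_1,\dots,v_l\in\Z^n$ (here ``distinct'' means $v_i\neq \pm v_j$ for $i\neq j$), and note that the total number of segments $n_1+\cdots+n_l$ equals $L=L(P)$. Consider the reductions $\bar v_1,\dots,\bar v_l$ in $\F_2^n=(\Z/2\Z)^n$. I claim these are distinct and nonzero; since $\F_2^n\setminus\{0\}$ has $2^n-1$ elements, this yields $l\leq 2^n-1$. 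Nonzeroness is immediate: a primitive vector cannot have all coordinates even, so $\bar v_k\neq 0$.

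The content lies in distinctness. Suppose for contradiction that $\bar v_i=\bar v_j$ for some $i\neq j$. Translating the two segments to the origin, set $E_i=[0,v_i]$, $E_j=[0,v_j]$. From $\bar v_i=\bar v_j$ we get $v_i+v_j=2w$ for some $w\in\Z^n$, and $w\neq 0$ since $v_i\neq -v_j$; writing $w=mu$ with $u$ primitive and $m\geq 1$, the main diagonal of the parallelogram $\Pi=E_i+E_j$, namely the segment $[0,v_i+v_j]=2m[0,u]$, has Minkowski length $2m$ and lies inside $\Pi$. Because $v_i\neq\pm v_j$, the vectors $v_i,v_j$ are linearly independent, so $\Pi$ is genuinely two-dimensional and this diagonal is a \emph{proper} subset of $\Pi$.

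Next I would form a new decomposition by deleting one copy each of $E_i$ and $E_j$ and inserting the $2m$ primitive segments making up the diagonal: $Z'=2m[0,u]+C$, where $C$ is the sum of the remaining $L-2$ segments and $Z=\Pi+C$. Since the diagonal lies in $\Pi$, we have $Z'\subseteq Z\subseteq P$, so $Z'$ exhibits $2m+(L-2)$ primitive segments inside $P$; maximality of $L$ forces $2m+L-2\leq L$, i.e.\ $m=1$. With $m=1$, $Z'$ is again a maximal decomposition with the same number $L$ of segments, and since the diagonal is a proper subset of the two-dimensional $\Pi$, the cancellation property of Minkowski sums of convex bodies (Rådström's lemma; equivalently, the width of $Z=\Pi+C$ in a direction transverse to $u$ inside $\spn(v_i,v_j)$ strictly exceeds that of $Z'=2m[0,u]+C$, as support functions add under Minkowski sum) gives $Z'\subsetneq Z$. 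This contradicts the minimality of $Z$, so $\bar v_i\neq\bar v_j$, completing the count.

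The one step needing care is the final strict inclusion $Z'\subsetneq Z$: set-theoretic containment alone does not contradict minimality, so one must check that adding the common summand $C$ to the diagonal $D\subsetneq\Pi$ preserves strictness. I expect this (handled by cancellation or the support-function comparison above) to be the main, if minor, obstacle; everything else is routine bookkeeping with the mod $2$ reduction.
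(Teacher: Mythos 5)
Your proof is correct and takes essentially the same route as the paper's: reduce the direction vectors of the summands modulo $2$, observe that primitive vectors reduce to nonzero classes so there are only $2^n-1$ of them, and note that two distinct summands equal mod $2$ have a non-primitive sum, which contradicts the minimality of $Z$ by replacing the parallelogram $E_i+E_j$ with its diagonal. The paper compresses this into three sentences; the details you spell out (forcing $m=1$ by maximality, and the strict inclusion $Z'\subsetneq Z$ via cancellation of Minkowski summands) are exactly what the paper leaves implicit, so yours is a more careful write-up of the same argument rather than a different one.
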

\begin{proof} Reduce all the summands in $Z$ modulo 2. Since the summands are primitive segments, there will be $2^n-1$ possibilities for a reduced segment.
If the number of distinct  segments in $Z$ is at least $2^n$, we will have two summands that are equal modulo 2. Then their sum is non-primitive, which contradicts the minimality of  $Z$.
\end{proof}

Although we expect that  the sum of the $2^n-1$ segments  with $0,1$ components mentioned in the proof of the above proposition has Minkowski length $2^n-1$, we do not have a proof of this statement, which would have implied that the bound of the proposition is sharp.

Let a lattice polytope $P$ be described by its  facets equations. Then Barvinok's algorithm \cite{Bar, Koppe}  counts the number of lattice points in $P$ in polynomial time.
We will assume that the list $P\cap\Z^3$ of the lattice points in $P\subset\R^3$ is given, and will explain how to find the Minkowski length of $P$ in polynomial time in $P\cap\Z^3$.

\begin{theorem}\label{T:alg}
 Let $P\subset \R^3$ be a lattice polytope with the given set of its lattice points $P\cap\Z^3$.  Then the Minkowski length $L(P)$ can be
found in polynomial time in $P\cap\Z^3$.
\end{theorem}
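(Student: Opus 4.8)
The plan is to convert the structural classification of Theorem~\ref{T:3d2} into a search over polynomially many candidate zonotopes, each tested by a small integer program. Throughout write $N=\#(P\cap\Z^3)$ for the size of the input. By Definition~\ref{D:maximal} a smallest maximal decomposition is a maximal decomposition, so it has exactly $L(P)$ summands, and since it is a Minkowski sum of segments we have $L(P)=\sum_i n_i$ for any smallest maximal decomposition $Z=A+\sum_i n_i[0,g_i]\subseteq P$. By Theorem~\ref{T:3d2} the generators $g_i$ are, up to an $\A3$-transformation, the explicit vectors of one of the two listed forms, and by Proposition~\ref{P:bound} there are at most seven of them. Thus it suffices to search, over all such zonotopes fitting inside $P$, for the one maximizing $\sum_i n_i$.

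First I would bound the search. Each generator $g_i$ occurring with $n_i>0$ is a difference of two lattice points of $P$, since both $A$ and $A+g_i$ are lattice points of $Z\subseteq P$. Hence there are only $O(N^2)$ possible primitive directions and $O(N^6)$ triples of them. A candidate decomposition is pinned down by choosing such a triple to be three of the occurring generators, together with one of the finitely many forms of Theorem~\ref{T:3d2} and one of the finitely many ways the triple can sit among the seven generators of that form; each choice determines the affine map (as the unique linear map sending the nominal generators to the chosen difference vectors, which is then tested for membership in $\GL(3,\Z)$) and hence the entire generator set $\{g_1,\dots,g_7\}$. This leaves $O(N^6)$ candidate generator sets, and I must allow determinant-$2$ triples such as $\{e_1+e_2,\,e_1+e_3,\,e_2+e_3\}$, since the nominal basis vectors $e_1,e_2,e_3$ need not occur.

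For a fixed generator set I would test containment by linear programming. Compute the facet inequalities $\langle a_j,x\rangle\le b_j$ of $P$ from $P\cap\Z^3$ by any convex-hull routine; the $a_j$ and $b_j$ have polynomial bit-size since all vertices lie in $P$. A translate $t+\sum_i n_i[0,g_i]$ lies in $P$ precisely when $\langle a_j,t\rangle+\sum_i n_i\max(0,\langle a_j,g_i\rangle)\le b_j$ for every facet $j$, because the support function of the segment $[0,g_i]$ is $\max(0,\langle\cdot,g_i\rangle)$. With the generators fixed, the coefficients $\max(0,\langle a_j,g_i\rangle)$ are constants, so this is a system of linear inequalities in the ten unknowns $(n_1,\dots,n_7,t)\in\Z_{\ge0}^7\times\R^3$, and maximizing $\sum_i n_i$ is a mixed-integer linear program in a fixed number of variables, solvable in polynomial time by Lenstra's algorithm. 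Any feasible integer point produces $\sum_i n_i$ primitive segments---every generator, including $e_1+e_2+2e_3$, is primitive---whose Minkowski sum lies in $P$, so the optimum never exceeds $L(P)$; and by Theorem~\ref{T:3d2} the optimal decomposition is captured by one of the enumerated generator sets, so the overall maximum equals $L(P)$.

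The step I expect to be the main obstacle is proving that the enumeration is \emph{complete}. For full-dimensional decompositions this needs the assertion that three of the occurring generators always form a frame realized by difference vectors and that the remaining generators are then forced; this is where the determinant-$1$ versus determinant-$2$ analysis of Theorem~\ref{T:3d2} must be pushed through carefully, checking every way a standard form can survive the deletion of the generators with zero multiplicity. Lower-dimensional decompositions fall outside this frame scheme and must be handled separately: one restricts $P$ to each of the polynomially many affine lattice planes spanned by two difference vectors and passing through a lattice point of $P$, runs the two-dimensional algorithm of \cite{SS1} on each slice, and similarly treats one-dimensional decompositions along each difference direction. Assembling these cases so that they provably cover everything permitted by Theorem~\ref{T:3d2} while keeping the total count polynomial is the delicate part; the polynomial running time itself then follows from the $O(N^6)$ bound on candidates together with the polynomiality of fixed-dimension integer programming.
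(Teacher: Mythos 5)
Your overall strategy is the same as the paper's: reduce to the two standard forms of Theorem~\ref{T:3d2}, enumerate candidate generator sets built from differences of lattice points of $P$, and then maximize the total multiplicity of a translated zonotope inside $P$. The paper does this by enumerating ordered quadruples $\{A,B,C,D\}\subseteq P\cap\Z^3$ whose difference vectors span volume $1$ or $2$, constructing all seven generators explicitly from $A,B,C,D$, and then brute-forcing over multiplicity tuples $(n_1,\dots,n_7)$ with $n_i\le M_i$, testing containment of $F+Z_m$ only for \emph{lattice} basepoints $F\in P\cap\Z^3$. However, your version of the optimization step has a genuine soundness gap: you allow the translation $t$ to range over $\R^3$ and then claim the optimum never exceeds $L(P)$. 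That claim is false. A real translate of a lattice zonotope can fit inside $P$ even when no lattice translate does, and only lattice translates give Minkowski sums of lattice segments, which is what $L(P)$ counts. Concretely, let $P=\mathrm{conv}\{(0,0,0),(1,0,0),(0,1,0),(0,0,1),(1,1,5)\}$. Its only lattice points are these five vertices, which have pairwise distinct parities and admit no parallelogram and no three collinear points, so $L(P)=1$. Yet $e_1,e_2,e_3$ is an admissible determinant-one frame of difference vectors, and $P$ contains the Reeve-type tetrahedron $\mathrm{conv}\{(0,0,0),(1,0,0),(0,1,0),(1,1,5)\}$, inside which the vertical line through $(1/2,1/2)$ meets $P$ in a segment of length $5/2$; hence the real translate $(1/2,1/2,0)+2[0,e_3]$ lies in $P$ and your mixed-integer program returns at least $2$.

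The repair is small but necessary: require $t\in\Z^3$, so the program is a pure integer program in ten variables, still solvable in polynomial time in fixed dimension by Lenstra's algorithm; this is exactly what the paper's cruder procedure enforces by translating only by lattice points $F\in P\cap\Z^3$. Your other worry, completeness of the enumeration when some multiplicities vanish or the optimal decomposition is lower-dimensional, is legitimate, but it is a weakness you share with the paper's own write-up (its quadruple search also presumes that a volume-$1$-or-$2$ quadruple of lattice points exists in $P$, which fails, e.g., for a Reeve tetrahedron where $L(P)=1$); your proposed fallback of running the two-dimensional algorithm on lattice slices and checking single directions is a reasonable way to close that case. The error to fix before this argument stands is the real-translate relaxation.
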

\begin{proof}

This algorithm relies on Theorem~\ref{T:3d2}. We search for all possible decompositions of the form described in the theorem.
For every quadruple of points $\{A,B,C,D\}\subseteq P\cap\Z^3$, where it is
important which point goes first and the order of the other three does
not matter, we check if $[0,B-A]$, $[0,C-A]$, and $[0,D-A]$ generate a
parallelepiped of volume one or two. If the volume is one,  these segments are equivalent
to $[0,e_1]$, $[0,e_2]$, $[0,e_3]$ and we look for maximal decompositions equivalent to
$$n_1 [0,e_1] + n_2 [0,e_2] + n_3 [0,e_3] + n_4 [0,e_1 + e_2 + e_3]  + n_5 [0,e_1 \pm e_2]  + n_6 [0,e_1+e_3]  + n_7 [0,e_2 + e_3],
$$
that is, maximal decompositions of the form
$$n_1E_1+ n_2 E_2+ n_3 E_3 + n_4 E_4+ n_5 E_5  + n_6 E_6  + n_7 E_7,
$$
where $E_1=[0,B-A]$, $E_2=[0,C-A]$, $E_3=[0,D-A]$, $E_4=[0,B+C+D-3A]$, $E_5=[0,B+C-2A]$ or $[0, B-C]$, $E_6=[0,B+D-2A]$, 
$E_7=[0, C+D-2A]$.

If the volume is two, we check if the segments $[0,B-A]$, $[0,C-A]$, and $[0,D-A]$ are primitive and if any  two of them generate
a parallelogram whose only lattice points are the vertices.  If this is the case, these three segments are equivalent to $[0,e_1]$, $[0,e_2]$, 
and $[0, e_1+e_2+2e_3]$. We then let $E_1=[0,B-A]$, $E_2=[0,C-A]$, $E_3=[0,D-A]$, $E_4=[0,(B+C+D-3A)/2]$, $E_5=[0,(B+D-C-3A)/2]$,
$E_6=[0,(C+D-B-3A)/2]$, $E_7=[0,(D-A-B-C)/2]$.

Next, for every $1\leq i\leq 7$, we find $M_i$, the largest integer such that  there is some lattice point $F$ in $P$ with $F+M_iE_i\subseteq P$.
For each $7$-tuple of integers $m=(n_1, \dots, n_7)$ where $0\leq
n_i\leq M_i$,
we check if some lattice translate of the zonotope
$Z_m=n_1E_1+\cdots+n_7E_7$ is contained in $P$ (we run through lattice
points $F$ in $P$ to check if  $F+Z_m$ is contained in $P$). For all
such zonotopes that fit into $P$ we look at $n_1+\cdots+n_7$ and find the
maximal possible value $N$ of this sum.

Finally, the largest such sum $N$ over all choices of
$\{A,B,C, D\}\subseteq P\cap\Z^3$ is $L(P)$. Clearly, this algorithm is polynomial in $P\cap\Z^3$.
\end{proof}

A group of REU students (Ian Barnett, Benjamin Fulan,  and Candice Quinn) at Kent State University in Summer 2011 tried to generalize this algorithm to dimension 4.
Their first step was to obtain a 4D version of Proposition~\ref{T:3d1}.  They showed that if $P\subseteq\R^4$ is a  lattice polytope
and $Z= A+n_1 E_1+\cdots+n_l E_l$ is a smallest maximal decomposition in $P$, then 
 ${\rm Vol}(E_i+E_j+E_k+E_m)\leq 14$ for any choice of $1\leq i,j,k,m\leq l$, and this bound is sharp.
Unfortunately, this bound is too high to obtain a description of smallest maximal decompositions in 4D, similar to the one  of 
 Proposition~\ref{T:3d2}.

\section{Lattice Polytopes of Minkowski Length 1}

It was shown in Theorem 1.6 of  \cite{SS1}  that if $P\supseteq Q=Q_1+\cdots+Q_l$ is a maximal decomposition of a polygon $P$ then at most one of $Q_i$ has an integer lattice point in its interior, that is, $\sum I(Q_i)\leq 1$. This fact was crucial in~\cite{SS1} for establishing  bounds on the minimum distance of the  toric surface code defined by $P$. We expect  that in order to extend these bounds  to 3D codes, one needs to explore similar questions in dimension 3. As it was mentioned above, a description of  polytopes of Minkowski length 1 in dimension 3 does not seem feasible. We will instead  reduce  the lattice segments contained in a 3D lattice polytope modulo 3, which will help us show that if $P$ is a 3D polytope with $L(P)=1$, then $\sum I(Q_i)$ is at most 4.


Let $P$ be  a lattice polytope in $\R^3$ of Minkowski length 1. Then $P$  has at most 8 lattice points. Indeed, otherwise there would have been two lattice points in $P$ that are congruent modulo 2, and hence the segment connecting them would have  Minkowski length of at least 2. 

For each segment whose endpoints are lattice points in $P$,  we consider its direction vector reduced modulo 3.  Since $v$ and $-v$ define the same segment, we identify such vectors. For thus defined  modulo 3 segments there are 13  equivalence classes:
$$(1,1,1), (1,1,-1), (1,1,0), (1,-1,1),(1,-1,-1), (1,-1,0),$$
$$ (1,0,1),(1,0,-1),(1,0,0),(0,1,1), (0,1,-1), (0,1,0), (0,0,1), 
$$ 
that is, we are dealing with  the projective space $\Z_3 \P^2$. Notice that if $a, b,$ and $c\in\Z_3\P^2$ are linearly independent (that is,  $a\neq b$, and  $c\notin \langle a,b\rangle=\{ a, b, a\pm b\}$),  they generate all the classes:
$$a, b, c, a+b, a-b, a+c, a-c, b+c, b-c, a+b+c, a+b-c, a-b+c, -a+b+c.
$$

%
%

Let $S$ be a five-point lattice set contained in a polytope of Minkowski length 1. There are ten lattice segments that connect lattice points in $S$.  We will classify such sets $S$ according to the numbers of segments from distinct classes in $\Z_3 \P^2$.

\begin{proposition}\label{T:class} If $L(P)=1$, then any 5-point lattice set $S$ in $P$ is of one of the following types.
\begin{itemize}
\item ${\rm 4+2+2+2}$  The segments are from classes $4a, 2b, 2(a+b), 2(a-b)$.  Here a 4 or a 2 in front of segment's class denotes its multiplicity, which is the number of times it  occurs  among the lattice  segments in $S$.
\item ${\rm 3+3+2+2}$  The segments are from classes $3a,3b, 2(a+b), 2(a-b)$.
\item ${\rm 3+(7)}$ The segments are from classes $3a, b, a+b, a-b, c, a+c, a-c, a+b-c$.
\item ${\rm 2+2+(6)}$ The segments are from classes $2a, 2b, a+b,a-b, a+c, b+c, a+b+c, c$. 
\item ${\rm(10)}$ All ten lattice segments connecting points in $S$ are from distinct classes in  $\Z_3 \P^2$.
\end{itemize}
The elements $a,b,c\in\Z_3 \P^2$ in each of the type descriptions are linearly independent. All types except for the last one have segments from classes $a, b, a+b, a-b$.
\end{proposition}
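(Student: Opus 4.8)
The plan is to pass to the reduction modulo $3$ and classify the resulting configuration combinatorially. First I would observe that $L(P)=1$ forces the five points of $S$ to be pairwise distinct modulo $3$: if two of them were congruent mod $3$, then $3$ would divide their difference, so the connecting segment would be non-primitive and hence of Minkowski length at least $2$, a contradiction. Reducing mod $3$ therefore yields five \emph{distinct} points $\bar p_1,\dots,\bar p_5\in\Z_3^3$, and each of the $\binom{5}{2}=10$ connecting segments determines a class $[\bar p_i-\bar p_j]\in\Z_3\P^2$. Since reduction mod $3$ sends $\A3$ onto the affine group $\operatorname{AGL}(3,\F_3)$ (as $\GL(3,\Z)\to\GL(3,\F_3)$ and $\Z^3\to\Z_3^3$ are both surjective), I may normalize the five points by any affine transformation of $\Z_3^3$; the whole statement is invariant under this and under relabeling and sign changes of $a,b,c$. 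So the task becomes a purely combinatorial classification of five distinct points in $\Z_3^3$ by the multiset of their difference classes.

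The engine is a counting lemma for multiplicities. For a fixed direction $d$, group the five points by the affine lines parallel to $d$ (each line of $\Z_3^3$ has exactly three points); if these lines carry $n_1,n_2,\dots$ of the points, the multiplicity of $d$ equals $\sum_i\binom{n_i}{2}$. From this I read off three facts. (a) The maximal multiplicity is $4$, attained only for the distribution $(3,2)$, which places all five points on two parallel lines and hence in a common plane. (b) A multiplicity of $2$ requires the distribution $(2,2,1)$, so four of the points form a parallelogram; its two pairs of opposite sides give two distinct classes of multiplicity at least $2$, and consequently a class of multiplicity $2$ never occurs in isolation. (c) Two distinct multiplicity-$3$ classes come from two collinear triples; among five points two triples must share a point $p$, so the points lie in $p+\langle d_1,d_2\rangle$ and are again coplanar.

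Next I split on the rank of the configuration. If the five points are coplanar, only the four classes $a,b,a+b,a-b$ of that plane occur, and since five points distributed among the three parallel lines of any direction always give $\sum_i\binom{n_i}{2}\ge 2$, each of these four classes has multiplicity at least $2$. Four classes of multiplicity $\ge 2$ summing to $10$ leave a surplus of $2$, placed either as $+2$ on one class or as $+1$ on two, producing exactly the types $4+2+2+2$ and $3+3+2+2$. If instead the points span $\Z_3^3$, I normalize a spanning tetrahedron to the standard simplex $\{0,e_1,e_2,e_3\}$, whose six edges already realize six distinct classes; the remaining point $q_5$ contributes four further classes. Using (a)--(c): fact (c) gives at most one multiplicity-$3$ class, the coplanarity argument behind (a) and (c) shows a multiplicity-$3$ class cannot coexist with a multiplicity-$2$ class in the spanning case, and (b) shows multiplicity-$2$ classes arise from a single quadrilateral's parallelogram; these constraints pin the profile to one of $3+(7)$, $2+2+(6)$, or $(10)$. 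Running through the positions of $q_5$ then yields the explicit class lists and in particular confirms the uniform feature that every type except $(10)$ contains $a,b,a+b,a-b$.

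The hard part will be the spanning case: organizing the admissible multiplicity profiles so that the intermediate ones ($2+1^{8}$, $3+2+1^{5}$, $2^{3}+1^{4}$, and so on) are excluded, and then verifying that the surviving configurations produce precisely the stated linear combinations, e.g. the appearance of $a+b-c$ and $a+b+c$ rather than some neighboring class. I expect the cleanest rigorous route is to let the qualitative coplanarity constraints of (a)--(c) do the conceptual work of bounding the number of high-multiplicity directions, and to finish with a short finite verification over the $S_4$-orbits of the position of $q_5$ relative to the normalized simplex (the residual symmetry permuting the four simplex vertices). Some care is also needed with the sign ambiguity in $\Z_3\P^2$, where each class equals its negative: the precise representatives $a+c$ versus $a-c$ in the type descriptions are only meaningful up to replacing $c$ by $-c$, and I would fix this convention explicitly when reconciling a computed configuration with the listed normal form.
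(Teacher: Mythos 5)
Your proposal is correct, and it takes a genuinely different route from the paper's proof. The paper never passes to point configurations in $\F_3^3$: it works directly with the lattice segments, orienting one representative per class and using two diagrammatic facts --- that two same-class segments sharing a vertex must be oriented head-to-tail (else the third side of the triangle is divisible by $3$), and that a triangle with two sides in one class has all three sides in that class --- and then builds the five types by hand, splitting on the maximal multiplicity $4,3,2,1$. You instead reduce everything to five distinct points of $\F_3^3$ (your opening observation that $L(P)=1$ forces pairwise distinctness mod $3$ is exactly the right extraction, and the surjectivity of $\A3\to\operatorname{AGL}(3,\F_3)$ is standard), and replace the diagram arguments by the line-distribution lemma: multiplicity of a direction $d$ equals $\sum_i\binom{n_i}{2}$ over the parallel lines. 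That lemma yields uniformly what the paper proves piecemeal (multiplicity at most $4$, multiplicity-$2$ classes never isolated, two multiplicity-$3$ classes force coplanarity), and your coplanar/spanning dichotomy makes the coplanar case a two-line counting argument ($10=2+2+2+2+\text{surplus}$), which is cleaner than the paper's treatment; note that the paper's $T_0$-triangle is precisely your collinear triple mod $3$, and its types $4{+}2{+}2{+}2$ and $3{+}3{+}2{+}2$ are precisely your coplanar configurations. Your spanning-case claims are all true as stated --- I checked in particular that a multiplicity-$3$ class coexisting with a multiplicity-$2$ class forces the five points into a plane of $\F_3^3$, and that a third multiplicity-$2$ class does as well --- so the deferred finite verification over positions of $q_5$ is a legitimate and routine completion; one concrete instance of the renormalization issue you flag: for the configuration $0,a,2a,u,v$ (triple plus two spanning points) the classes are $3a;\,u,u\pm a;\,v,v\pm a;\,u-v$, and they match the listed form $3a,b,a+b,a-b,c,a+c,a-c,a+b-c$ only after setting $b=u$ and $c=a+v$, not $c=v$. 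What the paper's approach buys is that its labeled diagrams (which vertices carry which classes) are reused verbatim in the subsequent propositions about sums $P+Q$; your approach buys a more systematic argument, an explicit invariance group, and a statement that visibly holds for arbitrary five points distinct mod $3$, independent of the Minkowski-length hypothesis beyond that consequence.
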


\begin{proof}
We assign direction to the segments by picking a standard representative from each of the classes.  If two segments from the same class share a vertex, the arrows cannot both point to or away from the vertex as in this case the third side in the triangle is of Minkowski length at least 3.  We also notice that if two sides in a triangle are from the same class, then the third one is also from that class and we get the  triangle diagram below.

\begin{figure}[h]
\centerline{
 \scalebox{.65}
 {
\input{three.pstex_t}}}
\end{figure}

{\noindent}No other segment starting in one of these three vertices can be of class $a$, so the only remaining segment in $S$ that could be of class $a$, is the one connecting two remaining points of $S$. Hence the largest number of segments of the same class in $S$ is 4. If we have 4 segments of the same class we get the diagram below.


\begin{figure}[h]
\centerline{
 \scalebox{.6}
 {
\input{twoplustwo2.pstex_t}}}
\caption{4+2+2+2}
\end{figure}

{\noindent} We call this type 4+2+2+2 as there are 4 segments of one type and 2 segments of each of the three other types.

Next, assume we only have 3 segments from class $a$. Then they would have to form a triangle. We could also have another 3 segments of class $b$, forming a triangle sharing a vertex with the first triangle. Then there are 3 segments of class $a$, 3 of class $b$, and 2 of each of $a+b$ and $a-b$. We call this type 3+3+2+2.

 Assume next there is no other triangle. Connect one of the vertices of the triangle whose sides are of class $a$ to a fourth lattice point  in $S$. Let this segment be of class $b$. The segment connecting the fourth lattice point to the fifth cannot be from classes $a,b, a+b,$ or $a-b$, as this would give either another triangle or  four segments of the same class. Hence that segment is of class $c$, such  that the set $\{a,b,c\}$ is linearly independent and we get the diagram below.
\begin{figure}[h]
\centerline{
 \scalebox{.6}
 {
\input{threeplusseven.pstex_t}}}
\caption{3+(7)}
\end{figure}
We call this type 3+(7). 

If there are no 3s but there is a 2, we get a configuration of type $2+2+(6)$. 
 
 \begin{figure}[h]
\centerline{
 \scalebox{.6}
 {
\input{twoplustwo1.pstex_t}}}
\caption{2+2+(6)}
\end{figure}
 
 Finally, it is possible that there are no repeats among classes of segments. An example of this situation is a tetrahedron with the vertices $(1,0,0), (0,1,0), (0,0,1), (-1,-1,-1)$ with one lattice point, the origin, strictly inside. We call this type (10).
\end{proof}

\begin{lemma}\label{L:int} Let $a,b,c,d\in \Z_3 \P^2$ where $a\neq b$ and $c\neq d$. Then $\langle a,b\rangle\cap \langle c,d\rangle\neq\emptyset$.
\end{lemma}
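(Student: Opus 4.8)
The plan is to recognize $\Z_3\P^2$ as the projective plane $\P^2(\F_3)$ and each set $\langle a,b\rangle$ as a projective \emph{line} inside it, after which the statement reduces to the familiar fact that two lines in a projective plane always meet.

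Concretely, I would first lift each class to a nonzero vector in $\F_3^3$, using that the only nonzero scalars in $\F_3$ are $\pm 1$, so a point of $\Z_3\P^2$ is exactly a one-dimensional subspace (line through the origin) of $\F_3^3$. Since $a\neq b$, their representatives are linearly independent, so $U=\spn_{\F_3}(a,b)$ is a $2$-dimensional subspace; a $2$-dimensional subspace of $\F_3^3$ contains exactly $(3^2-1)/(3-1)=4$ one-dimensional subspaces, and these are precisely the classes $a,b,a+b,a-b$. Thus $\langle a,b\rangle$ is the set of projective points contained in $U$, and likewise $\langle c,d\rangle$ is the set of projective points contained in a $2$-dimensional subspace $W=\spn_{\F_3}(c,d)$.

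The key step is then a dimension count inside the $3$-dimensional space $\F_3^3$:
$$\dim(U\cap W)\ \geq\ \dim U+\dim W-\dim \F_3^3\ =\ 2+2-3\ =\ 1.$$
Hence $U\cap W$ contains a nonzero vector $v$, and its class lies in both $\langle a,b\rangle$ and $\langle c,d\rangle$, so the intersection is nonempty.

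I expect no serious obstacle: the only point needing care is the bookkeeping that identifies the four-element set $\{a,b,a\pm b\}$ with the projectivization of $\spn_{\F_3}(a,b)$ and that translates ``common projective point'' into ``common one-dimensional subspace,'' so that the intersection of the two lines corresponds exactly to the projectivization of $U\cap W$. A purely computational alternative---checking directly that every pair among the thirteen lines of $\Z_3\P^2$ shares a class---would also establish the claim, but it is far less transparent than the dimension argument and I would avoid it.
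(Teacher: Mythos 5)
Your proof is correct, and it takes a genuinely different (more conceptual) route than the paper's. You identify $\Z_3\P^2$ with the projective plane over $\F_3$, recognize $\langle a,b\rangle$ and $\langle c,d\rangle$ as the point sets of projective lines (projectivizations of the $2$-dimensional subspaces $U=\spn_{\F_3}(a,b)$ and $W=\spn_{\F_3}(c,d)$), and conclude from the dimension formula $\dim(U\cap W)\geq 2+2-3=1$ that the two lines share a point; all the supporting bookkeeping (linear independence of representatives since $a\neq b$, the count of four one-dimensional subspaces in $U$, matching them with $a,b,a+b,a-b$) checks out. The paper instead argues in coordinates: if $c\in\langle a,b\rangle$ the claim is immediate, and otherwise $a,b,c$ form a basis of $\F_3^3$, so writing $d=\alpha a+\beta b+\gamma c$ and adding or subtracting $c$ to cancel $\gamma$ shows that one of $d$, $c+d$, $c-d$ lies in $\langle a,b\rangle$ while all three lie in $\langle c,d\rangle$. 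The two arguments rest on the same linear algebra over $\F_3$, but your version is the cleaner and more portable one---it is exactly the classical fact that two lines in a projective plane meet, and it generalizes verbatim to hyperplanes in any dimension and over any field---whereas the paper's version is self-contained, needs no identification of $\langle a,b\rangle$ with a subspace, and reuses the observation already made in the paper that three linearly independent classes generate all thirteen classes.
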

\begin{proof}
If $c\in\langle a,b\rangle$ the conclusion is obvious, so we can assume that $\langle a,b,c\rangle= \Z_3 \P^2$.  One of $d,c+d, c-d$ does not have $c$ in its expression in terms of $a,b,c$, hence it belongs to $\langle a,b\rangle$.
\end{proof}

\begin{proposition} Let $P$ and $Q$ be 3D lattice polytopes of Minkowski length 1 with at least five lattice points each. If there exists a 5-point  lattice subset $S$   of $P$ of type {\rm 4+2+2+2} or {\rm 3+3+2+2}, then  $L(P+Q)\geq 3$.
\end{proposition}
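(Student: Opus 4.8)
The plan is to argue by contradiction. Since $P$ and $Q$ are non-trivial lattice polytopes (each has at least five lattice points), $P+Q$ is a Minkowski sum of two non-trivial polytopes, so $L(P+Q)\geq 2$ automatically. It therefore suffices to rule out $L(P+Q)=2$, and I would assume this equality and derive a contradiction through Lemma~\ref{L:mult}.

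First I would extract the relevant projective line carried by $P$. Whether $S$ is of type ${\rm 4+2+2+2}$ or of type ${\rm 3+3+2+2}$, its segment classes are exactly the four elements $a,b,a+b,a-b$ of the line $\ell=\langle a,b\rangle\subseteq\Z_3\P^2$, and in both types each of these four classes occurs at least twice (the multiplicities are $4,2,2,2$ and $3,3,2,2$ respectively). By the second assertion of Lemma~\ref{L:mult}, the assumption $L(P+Q)=2$ then forces $Q$ to have \emph{no} segment in any class of $\ell$; equivalently, every segment class of $Q$ lies in the nine-element complement $\Z_3\P^2\setminus\ell$.

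Next I would show this is impossible, using that $Q$ has at least five lattice points. Pick a $5$-point subset $S'\subseteq Q$ and apply Proposition~\ref{T:class}. If $S'$ is of one of the first four types, its segment classes contain a full line $\ell'=\langle c,d\rangle$; by Lemma~\ref{L:int} the two lines meet, $\ell'\cap\ell\neq\emptyset$, so $Q$ carries a segment in some class of $\ell$ --- a contradiction. If instead $S'$ is of type ${\rm (10)}$, then $S'$ has ten \emph{distinct} segment classes, which cannot all be squeezed into the nine-element set $\Z_3\P^2\setminus\ell$; again $Q$ has a segment in a class of $\ell$, a contradiction. In every case the assumption $L(P+Q)=2$ collapses, whence $L(P+Q)\geq 3$.

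The only real content is the dichotomy for $Q$'s five-point configuration, and I expect the first-four-types case to be where the argument leans hardest on the earlier machinery: it uses both that each of those types exhibits a complete projective line of classes and the incidence fact (Lemma~\ref{L:int}) that any two lines of $\Z_3\P^2$ intersect. The type ${\rm (10)}$ case, by contrast, is a one-line pigeonhole count ($10>9$). No delicate estimate is needed; the main point to get right is that $P$'s two heavily occupied types genuinely fill the entire line $\ell$ with multiplicity at least two, so that Lemma~\ref{L:mult} can evict $Q$ from all four classes of $\ell$ simultaneously.
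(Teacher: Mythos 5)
Your proposal is correct and follows essentially the same route as the paper's own proof: both use the fact that in types ${\rm 4+2+2+2}$ and ${\rm 3+3+2+2}$ every class of the line $\langle a,b\rangle$ has multiplicity at least two, invoke Lemma~\ref{L:mult} to exclude those four classes from $Q$, rule out type ${\rm (10)}$ for a five-point subset of $Q$ by the pigeonhole count ($10 > 13-4$), and then combine Proposition~\ref{T:class} with Lemma~\ref{L:int} to force a shared class and hence a contradiction. The only difference is presentational --- you cast it as an explicit argument by contradiction, while the paper applies the lemmas directly.
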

\begin{proof}
Pick any 5-point lattice subset $T$ of $Q$. Since in $S$ we have used up four classes with multiplicities greater than 1, by Lemma~\ref{L:mult}, $T$ cannot be of type $(10)$, since the overall number of classes is 13. In $S$, we have multiple segments of each of the  classes $a$, $b$, $a+b$, $a-b$ for some $a,b\in\Z_3\P^2$. Since  $T$ is not of type (10),  we also have segments of classes $c$, $d$, $c+d$, $c-d$ in $T$ for some $c,d\in\Z_3\P^2$, not necessarily with multiplicities. By Lemmas \ref{L:int} and \ref{L:mult} we conclude $L(P+Q)\geq 3$.
\end{proof}

\begin{proposition}
Let $P$ and $Q$ be 3D lattice polytopes of Minkowski length 1. If there exist 3-point  lattice subsets $S$ and $T$  of $P$ and $Q$ correspondingly, each of which forms a triangle with sides of the same class, then  $L(P+Q)\geq 3$.  In particular, if both $S$ and $T$ are of type $3+(7)$, then $L(P+Q)\geq 3$.
\end{proposition}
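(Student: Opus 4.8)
The plan is to reduce each triangle to $T_0$, extract a shared direction class, and then finish as in the preceding proposition except in one genuinely geometric case. First I would observe that $\mathrm{conv}(S)\subseteq P$ and $\mathrm{conv}(T)\subseteq Q$, so $L(\mathrm{conv}(S))\le L(P)=1$ and likewise for $T$; since each is an honest two\nobreakdash-dimensional triangle we get $L(\mathrm{conv}(S))=L(\mathrm{conv}(T))=1$. Viewing each triangle inside its own affine lattice plane (unimodularly identified with $\R^2$) and applying Theorem~\ref{T:indecomp}, each is $\A2$\nobreakdash-equivalent to the standard $2$\nobreakdash-simplex or to $T_0$. The standard simplex has its three edges in three \emph{distinct} classes, so the hypothesis that all sides lie in one class forces $\mathrm{conv}(S)\cong T_0\cong\mathrm{conv}(T)$. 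In particular each has a single interior lattice point $m_S,m_T$: the three edges give class $a$ (resp.\ $b$) with multiplicity $3$, and the three segments from the interior point to the vertices realize the other three classes, so $S$ realizes exactly the four classes of the line $\langle a,a'\rangle\subset\Z_3\P^2$ cut out by its plane, and $T$ those of $\langle b,b'\rangle$.

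Next I would produce a common class. By Lemma~\ref{L:int}, $\langle a,a'\rangle\cap\langle b,b'\rangle$ contains a class $g$. If $g$ can be taken so that one polytope has at least two (hence non\nobreakdash-parallel) segments of class $g$ while the other has a segment of class $g$ — for instance whenever $a\in\langle b,b'\rangle$, using the three triangle edges of $S$, or symmetrically $b\in\langle a,a'\rangle$ — then a class\nobreakdash-$g$ segment of $P$ and one of $Q$ fail to be translates of one another, and Lemma~\ref{L:mult} immediately gives $L(P+Q)\ge3$. This is exactly the mechanism of the previous proposition.

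The remaining, and main, case is when the unique shared class $g$ differs from both $a$ and $b$ and has multiplicity one in each triangle. Then the class\nobreakdash-$g$ segment of $S$ is the unique vertex\nobreakdash-to\nobreakdash-$m_S$ segment $[u,m_S]$ and likewise $[t,m_T]$ in $T$. If these are not translates, Lemma~\ref{L:mult} again finishes. If they are equal, say $m_S-u=w=m_T-t$ with $w$ primitive, I would exploit that in $T_0$ the interior lattice point is the \emph{centroid}, so the cevian from $u$ through $m_S$ meets the midpoint $u+\tfrac32 w$ of the opposite edge, and the same holds in $T$. Then the four lattice points
$$u+t,\quad m_S+t,\quad m_S+m_T,\quad \bigl(u+\tfrac32 w\bigr)+\bigl(t+\tfrac32 w\bigr)$$
lie in $S+T\subseteq P+Q$, are collinear with common step $w$, and hence form a primitive segment of Minkowski length $3$.

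The step I expect to be the real obstacle is this last one, and the delicate point is orientation: the centroid\nobreakdash-to\nobreakdash-edge extensions of $S$ and of $T$ must run the \emph{same} way along $w$ for the four points above to be collinear and to sit inside $S+T$. When the two copies of $T_0$ are oppositely oriented along $g$ the naive construction stalls at length $2$, so to close the argument one must handle this configuration separately — either by ruling it out or by recovering the missing unit from lattice points of $P$ and $Q$ lying beyond the bare triangles (which are available in the intended application, where $S,T$ are the triangles of $3+(7)$ sets and thus carry the extra classes $b,a\pm b,c,\dots$ on which Lemma~\ref{L:int} and Lemma~\ref{L:mult} can be run with a better\nobreakdash-behaved common class).
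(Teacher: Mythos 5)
Your proposal is, in substance, the paper's own proof: the paper likewise reduces each triangle to $T_0$ (via a unimodular map onto the $(x,y)$-plane and Theorem~\ref{T:indecomp}), uses Lemma~\ref{L:mult} to rule out sharing the side classes and Lemma~\ref{L:int} to produce a shared cevian class, concludes from Lemma~\ref{L:mult} that the two cevians are translates of one another, and then adds the two cevians extended to the opposite sides, each of lattice length $3/2$, to obtain a segment of Minkowski length $3$. What the paper does \emph{not} do is address the point you isolate at the end: it simply asserts ``if we add them up we get a segment of Minkowski length 3,'' with no check of the relative orientation of the two cevians.

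That orientation issue is a genuine gap --- in the paper just as much as in your sketch. Lemma~\ref{L:mult} only says the two shared-class segments agree up to translation, so if $w$ is the vertex-to-interior-point vector of the cevian in $P$, the corresponding vector in $Q$ may be $-w$; in that case the sum of the two extended cevians is a translate of $\bigl[-\tfrac32 w,\tfrac32 w\bigr]$, which contains only the three lattice points $-w,0,w$ and hence has Minkowski length $2$, not $3$. Moreover, the reversed configuration really occurs and cannot be excluded using the triangles alone. Take $S=\mathrm{conv}\{(1,0,0),(0,1,0),(2,2,0)\}$, with interior point $(1,1,0)$, and $T=\mathrm{conv}\{(0,1,1),(-1,0,0),(1,-1,2)\}$, with interior point $(0,0,1)$. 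Both are unimodular copies of $T_0$; their unique common class in $\Z_3\P^2$ is $(0,1,0)$, realized by the cevians $[(1,0,0),(1,1,0)]$ and $[(0,1,1),(0,0,1)]$, which are translates with opposite orientation. The polytope $S+T$ has exactly $15$ lattice points (a translate of $T_0$ in each of the slices $z=0$ and $z=2$, and seven points in the slice $z=1$), and a direct check shows these contain no segment of lattice length $3$, no pair of parallel length-$2$ segments, and no $3$-term zonotope whatsoever, so $L(S+T)=2$. Thus the statement is actually false for the bare triangles (note this is a purely $3$D phenomenon: in the plane, oppositely oriented copies of $T_0$ share the side class and Lemma~\ref{L:mult} applies), and any complete proof of the proposition must use lattice points of $P$ and $Q$ beyond the two triangles --- the extra points of the $3+(7)$ sets, or three-dimensionality --- exactly as your closing sentence anticipates. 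So your flagged ``obstacle'' is not a defect of your write-up relative to the paper; it is a hole in the published argument itself.
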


\begin{proof}
We first notice that if a lattice polytope contains a lattice triangle with sides of the same class, then this triangle  is equivalent to $T_0$.
Indeed, we can easily map this triangle to one in the $(x,y)$-plane by creating a matrix of determinant 1 whose last row is a primitive vector 
orthogonal to the plane of the triangle. We know that in the  $(x,y)$-plane, up to the equivalence,  there are only two triangles of length one, the unit triangle and $T_0$. The unit triangle has sides that belong to three distinct classes and the sides of $T_0$ are all from the same class.
Hence the initial triangle is equivalent to $T_0$ and, therefore, has a lattice point inside and all four points are in the same plane.
\begin{figure}[h]
\centerline{
 \scalebox{.9}
 {\input{threeplusthree.pstex_t}}}
\end{figure}

We have such a configuration in both $P$ and $Q$. Let the sides of the triangle in $Q$ be of class $c$
 and one of the segments inside this triangle be of class $d$. By Lemma~\ref{L:int}, $P$ and $Q$ share a segment. By Lemma~\ref{L:mult},
 they cannot share $a$ or $c$, so they have a common lattice segment inside the triangles. We can assume  that $b$ and $d$ represent parallel segments.
 Notice that when extended to the intersection with the opposite side of $T_0$, these segments   have Minkowski length 1.5, that is, if we add them up 
 we get a segment of Minkowski length 3.  Hence $L(P+Q)\geq 3$. For example, if $P=Q=T_0$  we get the diagram below.
 
 \begin{figure}[h]
\centerline{
 \scalebox{.9}
 {\input{2T0.pstex_t}}}
\end{figure}
 \end{proof}

\begin{proposition} Let $P$ and  $Q$ be 3D lattice polytopes of Minkowski length 1. If there exist 5-point  lattice subsets $S$ and $T$ of $P$ and $Q$
of types $2+2+(6)$ and  $3+(7)$ correspondingly,  then $L(P+Q)\geq 3$.
\end{proposition}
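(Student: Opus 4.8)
\emph{Proof proposal.} The plan is to argue by contradiction. Since $L(P)=L(Q)=1$ one always has $L(P+Q)\ge 2$, so it is enough to rule out $L(P+Q)=2$; assume this. As $S$ has type $2+2+(6)$, the polytope $P$ carries at least two segments of class $a$ and two of class $b$, so by Lemma~\ref{L:mult} neither class $a$ nor class $b$ occurs in $Q$. As $T$ has type $3+(7)$, $Q$ carries three segments of class $a'$, so by Lemma~\ref{L:mult} class $a'$ does not occur in $P$. Two facts about lines in $\Z_3\P^2$ will drive the argument: $S$ realizes the full line $\langle a,b\rangle=\{a,b,a+b,a-b\}$, whereas $T$ realizes the two full lines $\langle a',b'\rangle$ and $\langle a',c'\rangle$, which meet only in $a'$.

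First I would pin down the classes that $P$ and $Q$ must share. Applying Lemma~\ref{L:int} to $\langle a,b\rangle$ against $\langle a',b'\rangle$ and against $\langle a',c'\rangle$ yields two intersection classes; each lies in $\langle a,b\rangle$ and occurs in $Q$, hence equals neither $a$ nor $b$ and must lie in $\{a+b,a-b\}$. The two are distinct, since a common value would lie in $\langle a',b'\rangle\cap\langle a',c'\rangle=\{a'\}$, while $a'$ does not occur in $P$ but $a+b,a-b$ do. Therefore both $a+b$ and $a-b$ occur in $Q$, and by Lemma~\ref{L:mult} the class-$(a+b)$ and class-$(a-b)$ segments of $Q$ coincide with those already present in $P$. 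Relabelling if necessary, I assume $a+b\in\langle a',b'\rangle$ and $a-b\in\langle a',c'\rangle$.

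To finish I would use $(a+b)\pm(a-b)\equiv \mp a,\ \mp b\pmod 3$. Write $\sigma,\tau$ for the class-$(a+b)$ and class-$(a-b)$ segments of $Q$. If $\sigma$ and $\tau$ share a vertex, the third side of the triangle they span has class $(a+b)\pm(a-b)$, i.e.\ class $a$ or $b$; as neither occurs in $Q$, this is the desired contradiction. The main obstacle is the remaining case, where $\sigma$ and $\tau$ are disjoint. Here I would bring in the geometry of the class-$a'$ triangle of $T$: having all three sides of a single class, it is a copy of $T_0$, and since $a+b\in\langle a',b'\rangle$ is one of its cevian directions, $Q$ contains a chord of Minkowski length $\frac{3}{2}$ in direction $a+b$. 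It then suffices to exhibit in $P$ a parallel chord of length at least $\frac{3}{2}$ in the same direction, for summing the two produces a segment of Minkowski length $3$ inside $P+Q$. Equivalently, if $P$ contains any copy of $T_0$ the pair $(P,Q)$ is already covered by the previous proposition, so the disjoint case reduces to the geometric claim that a $2+2+(6)$ polytope either contains a $T_0$ or still has extent $\frac{3}{2}$ along $a+b$. Proving this last point is where I expect the real difficulty to lie, since $2+2+(6)$ configurations need not be coplanar and may be elongated, so their extents are not determined by the class data alone.
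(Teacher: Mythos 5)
Your first half is sound and is essentially the paper's own opening: Lemma~\ref{L:mult} excludes $a$, $b$ from $Q$ and the multiplicity-three class from $P$, and Lemma~\ref{L:int}, applied to $\langle a,b\rangle$ against the two lines realized by the $3+(7)$ configuration, forces both $a+b$ and $a-b$ to occur in $T$; your distinctness argument (two lines through $a'$ meet only at $a'$) is correct. The shared-vertex case is also dispatched correctly, by the same one-line observation the paper uses. But note how the paper uses that observation: since the class-$(a+b)$ and class-$(a-b)$ segments of $T$ cannot share a vertex, they must be (after relabeling) exactly the two segments $AD$ and $BE$ joining the $T_0$-triangle $ABC$ to the outer points $D$ and $E$. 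In other words, the disjoint case is not a residual case --- it is the entire content of the proposition, and your proposal does not prove it.

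Your proposed route for the disjoint case has a concrete flaw beyond the gap you acknowledge. You want a chord of Minkowski length $3/2$ in $Q$ in direction $a+b$, coming from a cevian of the triangle $ABC$. But the cevian classes of $ABC$ are precisely the three classes that, together with $a'$, form the line of $\Z_3\P^2$ determined by the plane of $ABC$, whereas $a+b$ lies on the line $\langle a',b'\rangle$ determined by the segment $AD$; nothing forces these two lines to coincide, since in dimension three $D$ need not lie in the plane of $ABC$. Worse, $a+b$ and $a-b$ lie on the two distinct lines $\langle a',b'\rangle$ and $\langle a',c'\rangle$, so at most one of them could possibly be a cevian class. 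Finally, the fallback claim --- that a $2+2+(6)$ polytope either contains a copy of $T_0$ or has extent at least $3/2$ along the relevant direction --- is exactly what is needed and is not established; class data alone cannot give it, as you suspect. The paper instead stays purely combinatorial: having pinned down $e=a+b$ and $f=a-b$, it observes that the class $d$ of the triangle must be one of the five classes not realized by $S$, checks in a table that in each of the five cases $S$ and $T$ share at least four segments (none of which is $ED$), and then argues about incidences: the shared segments cannot all emanate from $D$, nor all from $E$ (Corollary~\ref{C:sharetriang} would produce a shared triangle with a side of class $d$), and the two remaining incidence patterns each yield a shared triangle forcing either a forbidden class or a repeated class in $S$. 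Some argument of this finite, case-checking kind is what you would need to close the gap.
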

\begin{proof}
We assume that $L(P+Q)=2$. 
Let the multiple classes in $S$ be  $a$ and $b$ and the class of multiplicity 3 in $T$ be $d$.  
Let the triangle in $T$ with all sides of class $d$ be $ABC$ and the remaining  two lattice points in $T$ be $D$ and $E$ with $AD$ of class $e$ and $BE$ of class $f$, as depicted in the diagram below.

\begin{figure}[h]
\centerline{
 \scalebox{.65}
 {\input{prop35.pstex_t}}}
\end{figure}

By  Lemma~\ref{L:int},   $\langle a, b\rangle \cap \langle d, e\rangle\neq\emptyset$ and $\langle a, b\rangle \cap \langle d, f\rangle\neq\emptyset$.
Since classes $a$ and $b$ cannot occur in $T$ and class $d$ cannot occur in $S$, segments of classes $a+b$ and $a-b$ have to appear in $T$, and we can assume that $e=a+b$ and $f=a-b$. This is because  these two segments in $T$ cannot share a vertex, as the third side in the triangle formed by  $a+b$ and $a-b$ would have to be of class  either $a$ or $b$.  

By Proposition~\ref{T:class}, the segments connecting the lattice points in $S$ are of classes $a$, $b$, $a+b$, $a-b$, $a+c$, $b+c$, $a+b+c$, and $c$ for some linearly independent $a,b,c\in\Z_3 \P^2$.
Hence there are five options left for $d$: $a-c$, $b-c$, $a+b-c$, $a-b-c$, $a-b+c$.  Notice that  in $T$ we have segments of classes $d+a+b$, $d-a-b$, $d-a+b$, and $d+a-b$.
Going through the five options for $d$, we observe that every time there are four lattice segments that are shared between $S$ and $T$. Two of them are $a+b$ and $a-b$. The remaining two for each of the five cases are listed in the table below. 

\begin{center}
\begin{tabular}{c|c}
$d$& shared segments in $S$ and $T$\\
\hline
$a-c$&$a-c-(a+b)=-(b+c), a-c+(a-b)=- (a+b+c)$\\
$b-c$&$b-c-(a+b)=-(a+c), b-c-(a-b)=-(a+b+c)$\\
$a+b-c$&$a+b-c-(a-b)=-(b+c),  a+b-c+(a+b)=-(a+b+c)$\\
$a-b-c$&$a-b-c-(a-b)=-c$, $a-b-c+(a+b)=-(a+c)$\\
$a-b+c$&$a-b+c-(a-b)=c$, $a-b+c-(a+b)=b+c$
\end{tabular}
\end{center}

We have checked that there are always at least four segments shared between $S$ and $T$, with the extra condition that in $T$ none of these four segments is  $ED$.
In $T$,  three of these segments cannot all  have  $E$ as an endpoint, as this would imply that  two of these segments in $S$ also share an endpoint, so by Corollary~\ref{C:sharetriang} there is a shared triangle, one of whose sides is $d$, which is impossible. Similarly, three of the shared  segments cannot  all have  $D$ as an endpoint.  Hence there are two possible scenarios, depicted in the diagram below. The shared segments are marked by $a+b,a-b,x$, and $y$. 

\begin{figure}[h]
\centerline{
 \scalebox{.65}
 {\input{prop352.pstex_t}}}
\end{figure}

In the first scenario, the triangle formed by $x$ and $y$ in $S$ is shared, so its third side $b$ is also shared, which leads to a contradiction.
In the second scenario, two triangles, one formed by $a+b$ and $y$, and  another by $a-b$ and $x$ are shared, so their third side $u$ appears twice in $S$, which is impossible.
\end{proof}

\begin{proposition}
Let $P$ and $Q$ be 3D lattice polytopes of Minkowski length 1. If there exist 5-point  lattice subsets $S$ and $T$  of $P$ and $Q$ correspondingly, of type {\rm 2+2+(6)} each, then  $L(P+Q)\geq 3$.
\end{proposition}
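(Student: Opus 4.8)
The plan is to assume $L(P+Q)=2$ and force a contradiction. By Proposition~\ref{T:class} I would write the classes of $S$ as $\{a,b,a+b,a-b,c,a+c,b+c,a+b+c\}$ with $a,b$ the two doubled classes, and the classes of $T$ as $\{a',b',a'+b',a'-b',c',a'+c',b'+c',a'+b'+c'\}$ with $a',b'$ doubled. Since $a,b$ occur with multiplicity $2$ in $P$, Lemma~\ref{L:mult} gives $a,b\notin T$ and, symmetrically, $a',b'\notin S$; moreover every class common to $S$ and $T$ is carried by one and the same segment in $P$ and in $Q$.

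The next step is to pin down exactly which classes are shared. The sets $\langle a,b\rangle$ and $\langle a',b'\rangle$ are two lines of $\Z_3\P^2$, and they are distinct because $a,b\notin\langle a',b'\rangle$; hence they meet in a single class, which cannot be any of $a,b,a',b'$ and so is a combination class. After adjusting representatives I may assume $a+b=a'+b'$. Because each of $S,T$ realizes $8$ of the $13$ classes of $\Z_3\P^2$, at least $3$ classes are shared. Running through the lines $\langle a',b'\rangle$ passing through $a+b$ and through the apex class $c'$, and discarding every choice that would place $a$ or $b$ into $T$, I expect the configuration to turn out rigid: up to the symmetry $a\leftrightarrow b$ the two polytopes must share all six non-doubled classes $a+b,a-b,c,a+c,b+c,a+b+c$, while $\{a',b'\}=\{a-c,\;a-b+c\}$. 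Thus $P$ and $Q$ carry the same six primitive segments $v_1(a{+}b),v_2(a{-}b),v_3(a{+}c),v_4(b{+}c),v_5(a{+}b{+}c),v_6(c)$, equal up to sign.

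Finally I would derive the contradiction from how these six segments must fit together. The two triangles of $S$ with edge classes $\{a+b,c,a+b+c\}$ and $\{a-b,a+c,b+c\}$ give the \emph{exact} identities $v_1=v_5-v_6$ and $v_4=v_2+v_3$. Now reconstruct $T$ from the same segments: at the apex of $T$ (the unique vertex meeting none of the doubled edges) the four spokes have classes $a+b+c,a-b,a+c,c$, so they are $\pm v_5,\pm v_2,\pm v_3,\pm v_6$, and $v_1,v_4$ are the two single edges joining their endpoints. Requiring these joining edges to have classes $a+b$ and $b+c$ couples the spoke signs and leaves only two free signs $\epsilon_1,\epsilon_3$. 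A direct check then shows the four doubled edges of $T$, which must carry classes $a-c$ and $a-b+c$, cannot do so: forcing the edge between the $c$-spoke and the $(a+c)$-spoke into class $a-c$ requires $\epsilon_1=-\epsilon_3$, but for those signs the edge between the $(a-b)$-spoke and the $(a+b+c)$-spoke falls into class $b-c$, while the only alternative sign choice throws the first edge into class $a$; and neither $b-c$ nor $a$ is a class of $T$. This contradicts $L(P+Q)=2$, so $L(P+Q)\ge 3$.

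The genuine obstacle is this last step. Unlike the earlier propositions, a single shared triangle is useless here: two shared segments meeting at a common vertex do \emph{not} determine the class of the third side, since a shared segment is defined only up to sign and the relative orientation of two of them is free. The contradiction must therefore be global, using all six shared segments simultaneously together with the exact (not merely mod~$3$) relations $v_1=v_5-v_6$, $v_4=v_2+v_3$ supplied by the triangles of $S$. The bookkeeping that makes the shared configuration rigid in the second step is the other place that will demand care.
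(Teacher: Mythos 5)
Your opening moves are sound and coincide with the paper's: assume $L(P+Q)=2$, use Lemma~\ref{L:mult} to ban each polytope's doubled classes from the other, and intersect the two lines $\langle a,b\rangle$ and $\langle a',b'\rangle$ in $\Z_3\P^2$ to normalize the shared class to $a+b$. The fatal problem is your second step: the rigidity you ``expect'' is false at the level of pure class bookkeeping. Write $S=\{2a,2b,a+b,a-b,c,a+c,b+c,a+b+c\}$ and take the doubled classes of $T$ to be $\{a-c,\,a-b+c\}$, as in your claimed normal form. Then, besides your configuration, $T$ can have diagonals of classes $a+b$ and $b+c$ and spokes of classes $\{a-b,\,b-c,\,a-b-c,\,a+b-c\}$: concretely, take the apex class $c'=a-b$ with $a'$ represented by $c-a$ and $b'$ by $a-b+c$, so that $c'+a'\equiv b-c$, $c'+b'\equiv a-b-c$, $c'+a'+b'\equiv a+b-c$, a legitimate $2+2+(6)$ pattern in the sense of Proposition~\ref{T:class}. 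This $T$ contains neither $a$ nor $b$, its doubled classes avoid $S$, and all eight of its classes are distinct, so it survives every test you propose; yet it shares only the three classes $a+b$, $a-b$, $b+c$ with $S$. Hence the configuration is not rigid, and your final argument, which needs all six shared segments $v_1,\dots,v_6$, has nothing to act on in this case.

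Eliminating such configurations is exactly where the geometric content of the paper's proof lives, and it is content you have explicitly set aside: you dismiss a shared triangle as ``useless'' because of the sign/orientation ambiguity, but Corollary~\ref{C:sharetriang} is precisely the tool that removes that ambiguity. If two shared segments meet at a vertex both in $S$ and in $T$, then (since by Lemma~\ref{L:mult} shared segments are equal up to translation) the two triangles lie in parallel planes, and the corollary forces them to be equal up to translation, so their third sides agree as well. In the configuration above, the segments of classes $a-b$ and $b+c$ meet at a vertex in both $S$ and $T$, but the third sides would have class $a+c$ in $S$ and $a+b-c$ in $T$, which kills this case; the paper's proof is exactly this kind of case analysis (is $a-b$ shared, does the third shared segment meet it at a vertex, etc.), with the contradiction always coming from non-identical shared triangles or from a class forced to appear where Lemma~\ref{L:mult} forbids it. So your proposal is not a complete proof: the combinatorial counting must be replaced by (or supplemented with) the vertex-incidence analysis through Corollary~\ref{C:sharetriang}, and your global sign-chasing, even if its computations check out, treats only one of several genuinely possible shared-class patterns.
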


\begin{proof}

We assume that $L(P+Q)=2$. Let the $a$ and $b$ be the classes of lattice segments in $S$  of multiplicity 2. Then $S$ also has segments of classes $a+b$ and $a-b$. By Lemma~\ref{L:mult} segments in $T$ of multiplicity 2 cannot be of classes $a,b,a+b,a-b$, so we  can assume that one of them is $c$ and the other  is $a+b+c$, by switching direction vectors of $a$, $b$, and $c$, if needed. Since we have 13 classes total, $S$ and $T$ overlap in at least 3 segments. One of them is of class $a+b$.  We will show that $S$ and $T$ share three segments that form a triangle one of whose sides is of class either $a+b$ or $a-b$.

Assume that the segment of class  $a-b$ is also shared. A segment of class $a-b$ in $T$ cannot share a vertex with a segment of class $a+b$ (then either the sum or difference of those classes would also be represented in $S$, but classes $a$ and $b$ cannot appear in $S$).
Changing direction of $c$ and/or  both $a$ and $b$, we can assume that $a-b$ is as in the diagram below.

 \begin{figure}[h]
\centerline{
\scalebox{.9}
 {
\input{shared.pstex_t}}}
\end{figure}

Let the class of the third shared segment be $x$. Assume that in $S$ the segment of class $x$ shares a vertex with $a-b$. Then in $T$ the segment of class  $x$ would  have to share
a vertex with $a-b$ as the only ones that don't are $c$, $a+b$, and $a+b+c$ and $x$ cannot be one of them. Hence $a-b$ and $x$ share a vertex in both $S$ and $T$ and therefore by Corollary~\ref{C:sharetriang} $S$ and $T$ share  a triangle with sides $a-b$, $a+b-c$, and $a+c$. Hence $x=a+b-c$ and the diagrams for $S$ and $T$ are below.

 \begin{figure}[h]
\centerline{
\scalebox{.9}
 {
\input{shared2.pstex_t}}}
\end{figure}

We see that both $S$ and $T$ have triangles with $a-c$ and $a+c$ as sides, but those triangles are not identical, which contradicts $L(P+Q)=2$.

Next, let next $x$ share a vertex with $a+b$ in $S$. If $x$ does not share a vertex with $a+b$ in $T$ as well, then the options for $x$ are $a+c$ and $a+b-c$. If $x=a+c$ then either $x-a=c$ or $x+b=a+b+c$ is in $S$, which is impossible. If $x=a+b-c$, then either $x+a+b=a+b+c$ or $x-(a+b)=c$ is in $S$, which is also impossible.
Hence a triangle with base $a+b$ is shared between $S$ and $T$, which leads to the same diagram and the same contradiction as before.

It remains to consider the case when $a-b$ is not shared between $S$ and $T$. We can also assume that $a+b-c$ is not shared. (If it is shared replace $S$ and $T$ in the above argument.)  Then $S$ and $T$ share two segments both of which have the fifth point as an endpoint in both $S$ and $T$. Then the triangle formed by these two segments  is shared and the base of that triangle is $a+b$. Notice that there is no room for other shared segments  as they would have to have a fifth point
as a vertex and $a+b$ is the only option for shared base. Let's denote one of the shared segments by $d$. We get the following diagram below.

 \begin{figure}[h]
\centerline{
 \scalebox{.9}
 {
\input{two226.pstex_t}}}
\end{figure}

We next search for the expression of $d$ in terms of $a,b$ and $c$ so that the only common segments between $S$ and $T$ are $a+b$, $d$, and $a+b+d$. Below is the list of segments used in $S$ and $T$.

\begin{center}
\begin{tabular}{c|c}
$S$& $T$\\
\hline
$2a$&$2c$\\
$2b$&$2(a+b+c)$\\
$a+b$&$a+b$\\
$a-b$&$a+b-c$\\
$d$&$d$\\
$d+a$&$d-c$\\
$d+b$&$d+a+b+c$\\
$d+a+b$&$d+a+b$
\end{tabular}
\end{center}

We clearly have $d\neq \pm a, \pm b, \pm (a+b), \pm (a-b), \pm c, \pm (a+b+c), \pm (a+b-c)$. All the remaining options are also very easy to get rid of. If $d=\pm b+c, \pm a+c, \pm (a-b)+c$, then $d-c\in\langle a+b \rangle$. If $d=-a-c$, then $d+a$ appears in both $S$ and $T$. If $d=-b-c$, then $d+b$ appears in both $S$ and $T$. If $d=a+b-c$, then $d+a+b$ appears in both $S$ and $T$. If $d=a-b-c$, then $d+a+b+c$ appears in both $S$ and $T$. If $d=a-c$, then $d+a$ is of the same class as $d-c$. Finally, if $d=b-c$, then $d+b$ is of the same class as $d-c$. Every time we arrive at a contradiction, which proves the proposition.
\end{proof}

\begin{proposition} Let $P$ and $Q$ be two 3D lattice polytopes of Minkowski length 1. If both $P$ and  $Q$ are of type (10) and $L(P+Q)=2$, then $P$ and $Q$ are equal  (the same up to translation). 
\end{proposition}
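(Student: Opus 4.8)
The plan is to assume $L(P+Q)=2$ throughout and to reconstruct $Q$ from $P$ one point at a time, using that shared segments are genuinely equal and that shared triangles are rigid. Note first that a polytope of Minkowski length $1$ of type $(10)$ has exactly five lattice points: six or more points would give at least $\binom{6}{2}=15>13$ connecting segments, forcing a repeated class. So $P$ and $Q$ are themselves five-point sets $S$ and $T$, and it suffices to show $S=T$ up to translation.

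First I would count shared classes. Since $\Z_3\P^2$ has only $13$ classes while each of the type-$(10)$ sets $S$ and $T$ realizes $10$ distinct classes, inclusion--exclusion gives at least $10+10-13=7$ classes common to $S$ and $T$. By Lemma~\ref{L:mult} each common class is realized by a single segment in $S$ and a single segment in $T$ that are equal up to translation; hence at least seven of the ten difference vectors of $S$ agree, up to sign, with difference vectors of $T$. I would record this as a partial matching between the edges of the complete graph $K_5$ on the points of $S$ and those of $K_5$ on the points of $T$, covering at least seven edges on each side, so that the unmatched edges form a graph $M$ with at most three edges on five vertices.

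Next I would fix signs and locate points by means of triangles. Because the matched edge-vectors are determined only up to sign, I would invoke Corollary~\ref{C:sharetriang}: whenever three matched edges form a triangle in $S$ and the three corresponding edges also form a triangle in $T$, those two triangles lie in parallel planes, each has Minkowski length $1$, and their Minkowski sum (a two-dimensional sum of two triangles) has length $2$; the corollary then forces them to be equal up to translation. This simultaneously identifies three points of $T$ with three points of $S$ and pins down the three edge-orientations consistently. The main body of the argument is then a short case analysis on where the at most three edges of $M$ sit inside $K_5$. In each case I would exhibit two matched triangles glued along a common matched edge (a ``book'' of triangles) covering all five vertices; gluing along the shared edge propagates the orientation and position data from the first three points to the fourth and the fifth, so that after a single translation all five points of $T$ coincide with those of $S$, giving $P=Q$ up to translation.

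The hard part will be the interaction of the sign ambiguity with the fact that the matching is only on edge-classes, not on vertices: three matched edges that form a triangle in $S$ need not automatically \emph{close up} into a triangle in $T$, since their $T$-vectors $\pm v_1,\pm v_2,\pm v_3$ can assemble into a path rather than a cycle. The extremal configuration of $M$---when the three missing edges form a triangle, leaving the shared graph as three triangles through one common edge---is exactly where this must be checked. I expect to resolve it by using that so many edges are shared that the only placement of the five points of $T$ consistent with all seven matched vectors and with Corollary~\ref{C:sharetriang} is the one reproducing $S$; Lemma~\ref{L:int} and the bookkeeping of which three classes are \emph{missing} from a type-$(10)$ set (as in Proposition~\ref{T:class}) should provide the extra leverage needed to rule out the non-closing alternatives.
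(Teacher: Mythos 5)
Your setup (at least $10+10-13=7$ shared classes, hence by Lemma~\ref{L:mult} at least seven pairs of segments equal up to translation) matches the paper's opening move, and you have correctly identified the crux of the problem: the matching is on edge classes only, with a sign ambiguity, so three matched edges forming a triangle in $S$ may assemble into a path rather than a triangle in $T$. But your proposal never overcomes this difficulty. The ``short case analysis'' on the unmatched graph $M$ is not carried out, and the treatment of the non-closing configurations is deferred to a hope that Lemma~\ref{L:int} and bookkeeping of missing classes ``should provide the extra leverage.'' As written, Corollary~\ref{C:sharetriang} can never be invoked from your data: it requires a lattice triangle in a plane section of $P$ \emph{and} a lattice triangle in the parallel section of $Q$, and producing that second triangle is precisely what you cannot yet do. So the proof has a genuine gap at its central step.

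The paper closes exactly this gap with a pigeonhole argument worth comparing against. Among the seven shared segments, some three are concurrent at a vertex of $P$ (seven segments have fourteen endpoints spread over five points); the three corresponding segments of $Q$ cannot be pairwise vertex-disjoint, since that would require six points, so two of them meet at a vertex of $Q$. Those two segments span a triangle in $P$ and a triangle in $Q$ having two pairs of parallel sides; the two triangles lie in parallel planes, and Corollary~\ref{C:sharetriang} then forces them to be translates of each other. The key point is that the third side of the triangle is \emph{produced} by the corollary --- it is never required in advance that the matched edges close up in $Q$, which is how the sign ambiguity you worried about is sidestepped. Iterating the same pigeonhole (one of the two tetrahedra over the shared triangle has at least two shared lateral edges) upgrades the shared triangle to a shared tetrahedron $ABCD$, and a final shared segment through the fifth vertex of $P$, being parallel to a segment through the fifth vertex of $Q$ and adjacent to a shared edge of the tetrahedron, pins down the last point. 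If you want to salvage your outline, replacing ``matched triangles in $S$ close up in $T$'' by this concurrency argument is the missing idea.
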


\begin{proof}
Since there are 13 classes of segments total, $P$ and $Q$ share at least 7 segments. Among these shared segments we can find three that have a common endpoint in $P$. At least two of these three shared segments have  a common endpoint in $Q$. Hence $P$ and $Q$ share a triangle $ABC$.  Let the two remaining lattice points in $P$ be $D$ and $E$. 

At least one of the two tetrahedra $ABCD$ and  $ABCE$ with base $ABC$, say, $ABCD$, has at least two lateral edges that are shared. Together with a segment in the base these two edges form a triangle with shared sides.  At least two of these three shared sides share  a vertex in $Q$, so by Corollary~\ref{C:sharetriang} we get a shared triangle. We have shown that $P$ and $Q$ have two pairs of shared triangles that have a common edge, so $P$ and $Q$ share a tetrahedron $ABCD$. Let the fifth vertices in $P$  and $Q$ be $E$ and $E'$.

Since $P$ and $Q$ share at least  seven lattice segments, there is a segment in $P$ connecting one of $A, B, C, D$ to $E$ which is shared with $Q$.
Let this segment be $AE$.  The parallel segment in $Q$  is of the form $E'X$ where $X=A, B, C,$ or $D$. In any of these cases, $AE$ and $E'X$ are adjacent 
to a shared segment $AX$, so triangles $AEX$ and $AE'X$ are translates of each other, which implies that $P$ and $Q$ are the same up to a translation . 
\end{proof}

\begin{proposition} Let $P$ and  $Q$ be 3D lattice polytopes of Minkowski length 1. If there exist 5-point  lattice subsets $S$ and $T$ of $P$ and $Q$
of types $3+(7)$ and  $(10)$ correspondingly,  then $L(P+Q)\geq 3$.
\end{proposition}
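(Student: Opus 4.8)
The plan is to argue by contradiction, assuming $L(P+Q)=2$, and to follow the template of the preceding propositions: use Lemma~\ref{L:mult} to forbid certain classes in $T$, count the forced overlap of classes between $S$ and $T$, locate a triangle shared by $P$ and $Q$, and then reach a contradiction via Corollary~\ref{C:sharetriang}.

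First I would record the combinatorial constraints. By Proposition~\ref{T:class} the classes occurring in $S$ are $a$ (with multiplicity three) together with the seven classes $b,\,a+b,\,a-b,\,c,\,a+c,\,a-c,\,a+b-c$, so $S$ meets eight of the thirteen classes of $\Z_3\P^2$. Since $S$ carries three segments of class $a$, Lemma~\ref{L:mult} forbids class $a$ in $T$ altogether. As $T$, being of type $(10)$, meets ten classes, the inclusion--exclusion count $8+10-13=5$ shows that $S$ and $T$ share at least five classes, and all of these lie among the seven non-$a$ classes of $S$; for each shared class the corresponding segments of $P$ and $Q$ are translates of one another, again by Lemma~\ref{L:mult}. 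Thus at most two of the seven classes $b,a\pm b,c,a\pm c,a+b-c$ fail to appear in $T$.

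Next I would exploit the geometry of $S$. Writing $P_1P_2P_3$ for the triangle whose three sides are of class $a$ (equivalent to $T_0$, hence planar with an interior lattice point) and $P_4,P_5$ for the two apex points joined to it by the remaining seven classes, the only triangles of $S$ all of whose edges avoid class $a$ are $P_4P_5P_1$, $P_4P_5P_2$, $P_4P_5P_3$, each using the diagonal edge $P_4P_5$ of class $a+b-c$ together with one of the pairs $\{b,c\}$, $\{a+b,a+c\}$, $\{a-b,a-c\}$. When $a+b-c$ is shared, at most one further class is missing from $T$, so one of these three pairs survives intact and the corresponding triangle $P_4P_5P_i$ has all three edges shared. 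The three matching segments of $T$ have the same three directions, satisfy the same linear (triangle) relation, and therefore lie in parallel planes; since $T$ is genuinely three-dimensional and of type $(10)$, these segments assemble into a lattice triangle of $Q$ in that plane, and Corollary~\ref{C:sharetriang} forces it to coincide, up to translation, with $P_4P_5P_i$. I would then bring in a fourth shared edge incident to $P_i$ (one of the class-$a$-creating pairs, such as $P_iP_4$ of class $b$ alongside the base edge $P_iP_2$ of class $a$): the matched triangle fixes the location of the class-$b$ segment in $T$, and Lemma~\ref{L:mult} together with the forced incidences then compels $T$ to contain either a second segment of an already-used class---contradicting type $(10)$---or a segment of the forbidden class $a$.

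The main obstacle, which I expect to require a short separate argument, is the sub-case in which the diagonal class $a+b-c$ is itself one of the (at most two) classes missing from $T$. There no triangle of $S$ with all edges shared survives, so the shared triangle cannot be produced inside $S$ directly; instead one must locate three shared classes that form a triangle inside $T$ and pull this triangle back to $P$ through Corollary~\ref{C:sharetriang}, checking that the pulled-back triangle is compatible with the rigid $T_0$-structure of $P_1P_2P_3$. The remaining work is the bookkeeping that turns each matched triangle into the final contradiction, which I would organize as a short table over the possible missing classes, exactly in the spirit of the case analyses in the preceding propositions.
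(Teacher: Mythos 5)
Your setup is sound: forbidding class $a$ in $T$ via Lemma~\ref{L:mult}, the count $8+10-13=5$ of shared classes, and the conclusion that one of the pairs $\{b,c\}$, $\{a+b,a+c\}$, $\{a-b,a-c\}$ survives intact when the diagonal class is shared (modulo a slip: up to \emph{two} further classes may then be missing, not one, though the conclusion still holds). But there are two genuine gaps. First, the assertion that the three matching segments of $T$ ``assemble into a lattice triangle of $Q$'' does not follow from what you say: segments of $T$ that are merely translates of the three edges of $P_4P_5P_i$ need not share any endpoints, and neither three-dimensionality nor type $(10)$ forces them to close up. The step can be rescued, but only by a pigeonhole you never invoke: three pairwise disjoint segments need six distinct endpoints while $T$ has only five points, so two of the three shared segments must meet at a vertex of $T$; those two span a lattice triangle of $Q$ in a plane parallel to that of $P_4P_5P_i$, and only at that point does Corollary~\ref{C:sharetriang} identify it with $P_4P_5P_i$ up to translation. (This pigeonhole-on-five-points is precisely the engine of the paper's own proof, which works instead with the five-point set consisting of the class-$a$ triangle, its interior lattice point, and one apex, and analyzes adjacencies around those two distinguished points.)

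Second, and more seriously, the sub-case in which $a+b-c$ is missing from $T$ is an acknowledged hole, and the repair you propose---find three shared classes bounding a triangle inside $T$ and pull it back---cannot work as stated, because Corollary~\ref{C:sharetriang} requires lattice triangles in parallel plane sections of \emph{both} polytopes; if the corresponding segments of $S$ do not themselves bound a triangle, there is nothing in $P$ to compare against. The correct move is to show this case never occurs: by the same pigeonhole-plus-Corollary~\ref{C:sharetriang} argument, no two of the translates in $T$ of $P_1P_4$, $P_2P_4$, $P_3P_4$ can share an endpoint (the third side of the triangle they would span is a translate of some $P_iP_j$, of the forbidden class $a$), so at most two of the classes $b$, $a+b$, $a-b$ occur in $T$; likewise at most two of $c$, $a+c$, $a-c$. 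That forces at least two missing classes among those six, so if $a+b-c$ were also missing, three of the seven classes would be absent, contradicting your own bound of at most two. Your closing contradiction (a fourth shared edge forces a repeated class or class $a$ in $T$) is likewise only sketched, but it does go through with the same tools; without the two repairs above, however, the argument does not close.
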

\begin{proof}
We assume that $L(P+Q)=2$.
There are 13 classes of segments total, so $S$ and  $T$ have to share at least 5 segments.   
Let $S$ have lattice points $A$, $B$, $C$, $D$, and $E$, where $ABC$ is a triangle with sides of the same class and $D$ a point inside
that triangle.   If $AE$, $BE$, and $CE$ are all shared with $T$, then two of them share a vertex in $T$ and hence  one of the triangles  $ABE$, $ACE$, or $BCE$ is shared, which is impossible since sides of $ABC$ cannot be shared. Hence one of $AE$, $BE$, $CE$ is not shared. Similarly, one of $AD$, $BD$, $CD$ is not shared.
This implies that $ED$ is shared, and we can assume that $ADE$ is a shared triangle, $DB$ and $EC$ are shared, while $DC$ and $EB$ are not shared.

 \begin{figure}[h]
\centerline{
 \scalebox{.8}
 {
\input{3plus7and10.pstex_t}}}
\end{figure}

Then $b$ and $b+c-a$ in $T$ should not be adjacent to $ED$ (or either $DC$ or $EB$ would be shared).
Hence one of $b$, $b+c-a$ has $A$ as an endpoint in $T$. If it is $b$, we have either $a$ or $a-b$ in $T$;
if it is $b+c-a$, then either $b+c$ or $a$ is in $T$. Both of these options are impossible. 
\end{proof}

\begin{proposition} Let $P$ and  $Q$ be 3D lattice polytopes of Minkowski length 1. If there exist 5-point  lattice subsets $S$ and $T$ of $P$ and $Q$
of types $2+2+(6)$ and  $(10)$ correspondingly,  then $L(P+Q)\geq 3$.
\end{proposition}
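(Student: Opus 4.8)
The plan is to assume, for contradiction, that $L(P+Q)=2$, and to exhibit a segment of a forbidden class (one of the two doubled classes of $S$) inside $T$, contradicting Lemma~\ref{L:mult}. Let $a,b$ be the two classes of multiplicity $2$ in $S$, so that by Proposition~\ref{T:class} the classes of $S$ are $a,b$ (each twice) together with the six single classes $a+b$, $a-b$, $c$, $a+c$, $b+c$, $a+b+c$ for some linearly independent $a,b,c\in\Z_3\P^2$. By Lemma~\ref{L:mult} no segment of $T$ lies in class $a$ or $b$. Since $\Z_3\P^2$ has $13$ classes and the type-$(10)$ set $T$ realizes ten \emph{distinct} classes, all avoiding $a$ and $b$, the ten classes of $T$ are exactly the eleven admissible classes minus one further class $z$. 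In particular $T$ omits at most one of the six single classes of $S$, so $S$ and $T$ share at least five segments, and by Lemma~\ref{L:mult} each shared class is realized by one and the same segment (up to translation and sign) in $S$ and in $T$.

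First I would record the combinatorial shape of a type-$2+2+(6)$ set. Writing $S=\{V_1,\dots,V_5\}$, the two class-$a$ edges and the two class-$b$ edges must be vertex-disjoint (two same-class edges sharing a vertex would force a third edge of that class, i.e. a triangle, which is excluded in this type); arranging them as $V_1V_2,V_3V_4$ of class $a$ and $V_1V_3,V_2V_4$ of class $b$ forces the diagonals $V_1V_4,V_2V_3$ to have classes $a+b,a-b$, and the apex $V_5$ to be joined to $V_4,V_3,V_2,V_1$ by the classes $c,a+c,b+c,a+b+c$. Thus the six single-class segments form two triangles meeting only at $V_5$, namely $T_1=\{a+b,\,c,\,a+b+c\}=V_1V_4V_5$ and $T_2=\{a-b,\,a+c,\,b+c\}=V_2V_3V_5$. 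The decisive feature is that the four apex segments $c,a+c,b+c,a+b+c$ have pairwise differences realizing precisely the forbidden classes: $(a+c)-c=a$, $(b+c)-c=b$, $(a+b+c)-(b+c)=a$, $(a+b+c)-(a+c)=b$ (the remaining two differences giving the allowed $a\pm b$).

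Since $T$ omits at most one single class, at least one of $T_1,T_2$ has all three of its classes present in $T$; say $T_2$. Because any three segments among the five points of $T$ contain an adjacent pair, two of $a-b,a+c,b+c$ meet at a vertex of $T$, and as these classes are pairwise adjacent in $S$, this should produce a translate of the triangle $T_2$ inside $T$ via Corollary~\ref{C:sharetriang}. I would then bring in the fourth apex segment $a+b+c$ (present whenever $z\neq a+b+c$): in $S$ it meets $a+c$ and $b+c$ at $V_5$ with third sides of the forbidden classes $b$ and $a$, and the goal is to show that in every admissible placement of $a+b+c$ among the five points of $T$ one of these forbidden third sides is forced into $T$, or else some class of $T$ is repeated (impossible in type $(10)$) or the omitted class $z$ is pinned to two different values.

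The step I expect to be the main obstacle is exactly this last one, and the reason is the sign ambiguity inherent in Lemma~\ref{L:mult}: a shared segment is determined only up to sign, so two shared segments adjacent in $T$ span a triangle whose third side lies in one of two classes, $v-w$ or $v+w$, and only one of these is the forbidden class. Killing the ``wrong-sign'' branches requires, as in the proofs of the preceding propositions, a finite case analysis organized by which class $z$ is omitted and by how the five or six shared segments distribute among the five points of $T$; in each branch one checks, via Corollary~\ref{C:sharetriang} and the incidences recorded above, that a class $a$ or $b$, or a repeated class, is unavoidable. I expect this casework, rather than any single conceptual step, to constitute the bulk of the argument, mirroring the tables and diagrams used in the earlier $2+2+(6)$ and $(10)$ cases.
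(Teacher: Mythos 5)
Your setup is sound and coincides with the paper's own: the count showing that $S$ and $T$ must share at least five segments, the two-triangles-meeting-at-the-apex structure of a $2+2+(6)$ set, and the production of a shared triangle (two shared segments adjacent in $T$ and adjacent in $S$ must span translate triangles, by Corollary~\ref{C:sharetriang}) are all correct. But the proof stops exactly where the real work begins. You state what remains to be shown --- ``in every admissible placement of $a+b+c$ among the five points of $T$ one of these forbidden third sides is forced into $T$, or else some class of $T$ is repeated \dots or the omitted class $z$ is pinned to two different values'' --- and then defer it to a case analysis, organized by the omitted class $z$, which you do not carry out and which you yourself expect to be ``the bulk of the argument.'' As written, this is an outline with the decisive contradiction missing, not a proof.

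The missing step can in fact be done in a few lines, with no enumeration over $z$, and the ``sign ambiguity'' you single out as the main obstacle never arises: the paper only ever invokes Corollary~\ref{C:sharetriang} for pairs of shared segments that are adjacent in $S$, and for such pairs the corollary forces the triangle in $T$ to be a translate of the one in $S$, so the third side is pinned down exactly (there is no $v+w$ versus $v-w$ branch). Concretely, in your notation: after the shared triangle, say $V_2V_3V_5$, is found, at least one of the remaining apex segments $V_5V_4$, $V_5V_1$ is shared. Its copy in $T$ cannot meet the copy of the triangle, because every vertex of that copy lies on a copy of $V_3V_5$ or $V_2V_5$, and adjacency with either would (by the corollary) force a copy of $V_3V_4$ or $V_2V_4$, i.e.\ a segment of the forbidden class $a$ or $b$, into $T$. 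Hence that copy joins the two vertices $X,Y$ of $T$ outside the triangle; since the segment $XY$ is unique, $V_5V_4$ and $V_5V_1$ cannot both be shared. The count of five shared segments then forces the second diagonal $V_1V_4$ to be shared, and its copy must have an endpoint in $\{X,Y\}$ (both endpoints on the triangle copy would make it coincide with a side of a different class), so it is adjacent to $XY$; since $V_1V_4$ meets the shared apex segment at $V_4$ in $S$, the corollary forces a copy of the other apex segment $V_5V_1$ into $T$ with an endpoint on the triangle copy --- contradicting what was just established. Some placement argument of this pigeonhole kind is what your proposal needs and lacks.
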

\begin{proof}
We assume that $L(P+Q)=2$.
There are 13 classes of segments total, so $S$ and  $T$ have to share at least 5 segments.    Let $S$ have lattice points $A$, $B$, $C$, $D$, and $E$, where $AB$ and $CD$ are from the same class, and  $BC$ and $AD$ are from the same class.
At least 3 segments starting at $E$ are shared. In $T$ at least two of them share a vertex, hence $S$ and $T$ share a triangle with vertex $E$. We can assume that that triangle is $AEC$. One of $EB$, $ED$ is also shared, let's assume it's $EB$.
This segment in $T$ cannot share a vertex with $AE$ or $EC$ (this would imply that either $AB$ or $BC$ is shared), so it connects two remaining vertices $X$ and $Y$.
If $ED$ is also shared, same would be true, but there are only five vertices, so $ED$ is not shared. Hence $BD$ is shared. But it would have  to have either $X$ or $Y$ as one of the vertices, hence $EBD$ is shared, a contradiction.
\end{proof}

\begin{theorem}\label{T:threeP} Let $P$, $Q$, $R$ be three 3D lattice polytopes of Minkowski length 1 with at least five lattice points each. Then $L(P+Q+R)\geq 4$.  
\end{theorem}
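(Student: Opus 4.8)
The plan is to combine the superadditivity of the Minkowski length with the pairwise analysis carried out in the preceding propositions. I would first record two elementary properties of $L$: monotonicity ($A\subseteq B\Rightarrow L(A)\leq L(B)$, since segments summing inside $A$ also sum inside $B$) and superadditivity ($L(A+B)\geq L(A)+L(B)$, obtained by concatenating a maximal decomposition of $A$ with one of $B$). Since $L(R)\geq 1$, superadditivity gives $L(P+Q+R)\geq L(P+Q)+L(R)\geq L(P+Q)+1$, and symmetrically for the other two groupings. Hence if even one of the three pairwise sums has Minkowski length at least $3$, the theorem follows. It remains to treat the case $L(P+Q)=L(P+R)=L(Q+R)=2$.

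In this remaining case I would use the preceding propositions to pin down the summands. Together they show that whenever $L(P+Q)=2$ with both polytopes having at least five lattice points, no $5$-point subset of either $P$ or $Q$ can be of type ${\rm 4+2+2+2}$, ${\rm 3+3+2+2}$, ${\rm 3+(7)}$ or ${\rm 2+2+(6)}$: a single such subset in one polytope, paired against a $5$-point subset of \emph{any} of the five types in the other, already forces $L(P+Q)\geq 3$ (one checks that every type pairing is covered by one of the propositions preceding the theorem). Thus every $5$-point subset of $P$, $Q$ and $R$ must be of type ${\rm (10)}$. Applying the proposition that treats two type-${\rm (10)}$ polytopes to each of the three pairs shows that the chosen type-${\rm (10)}$ subsets $S_P\subseteq P$, $S_Q\subseteq Q$, $S_R\subseteq R$ all coincide up to translation; call the common set $S$. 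Consequently $P+Q+R\supseteq S_P+S_Q+S_R$, which is a lattice translate of $S+S+S$, so that $L(P+Q+R)\geq L(3\,\mathrm{conv}(S))$.

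The crux is then the purely geometric statement $L(3\,\mathrm{conv}(S))\geq 4$ for the type-${\rm (10)}$ set $S$ at hand. I would first argue that such an $S$ consists of a lattice tetrahedron $ABCD$ together with a single interior lattice point $O$: all ten connecting segments are primitive and pairwise inequivalent mod $3$, which rules out collinear or parallel configurations and forces a central point. Writing $u_1,u_2,u_3,u_4$ for the primitive vectors from $O$ to the vertices, interiority of $O$ yields a positive relation $\sum_i\alpha_i u_i=0$. I would then take the four primitive segments $[O,A],[O,B],[O,C],[O,D]$ and show that a centered lattice translate of the zonotope $[0,u_1]+\cdots+[0,u_4]$ lies inside $3\,\mathrm{conv}(S)$; containment against the facet with inner normal $n$ reduces to $\tfrac12\sum_i|\langle n,u_i\rangle|\leq 3$, i.e. $\sum_i|\langle n,u_i\rangle|\leq 6$. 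For the reflexive simplex $\mathrm{conv}\{e_1,e_2,e_3,-e_1-e_2-e_3\}$ three of the pairings $\langle n,u_i\rangle$ equal $1$ and the fourth is controlled by the barycentric weight of the opposite vertex, so this inequality holds exactly, and the four segments exhibit $L(3\,\mathrm{conv}(S))\geq 4$.

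The main obstacle is precisely this last point. For the unbalanced type-${\rm (10)}$ tetrahedra the centered four-segment zonotope overshoots one facet (the term $|\langle n,u_j\rangle|$ for a small-weight vertex exceeds what the bound allows), so one must either exhibit a different quadruple of primitive segments — for instance three spanning a unimodular parallelepiped in the ``fat'' part of $3\,\mathrm{conv}(S)$ together with one more — or show that no such configuration occurs. I expect the clean route is to exploit the extra hypothesis we are entitled to use, namely $L(2\,\mathrm{conv}(S))=L(P+Q)=2$ in addition to type ${\rm (10)}$: this should force the balanced (reflexive $\mathbb{P}^3$) simplex and thereby reduce everything to the computation above. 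Establishing this dichotomy directly, rather than by appealing to the full classification of $3$-dimensional Fano tetrahedra \cite{Kas}, is where the real work lies.
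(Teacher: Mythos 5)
Your reduction — superadditivity of $L$ plus the pairwise propositions to force all three polytopes to contain type-$(10)$ subsets that coincide up to translation, leaving only the inequality $L(3\,\mathrm{conv}(S))\geq 4$ — is exactly the paper's skeleton and is sound. The gap is in your treatment of that last inequality, and it is twofold. First, your structural claim that a type-$(10)$ set ``forces a central point,'' i.e.\ is a tetrahedron $ABCD$ with an interior lattice point $O$, is false: the set $\{0,\,e_1,\,e_2,\,e_3,\,e_1+e_2+e_3\}$ has all ten connecting segments in distinct classes of $\Z_3\P^2$, yet its convex hull has five vertices and no interior lattice point. The paper must (and does) treat this five-vertex case separately, invoking Scarf's normal form: such a polytope is equivalent to $\mathrm{conv}\{0,e_1,e_2,e_3,(1,a,b)\}$ with $\gcd(a,b)=1$, $0\leq a\leq b$, and then either $b\geq 3$ and a parallelogram of area $b$ inside $2P$ gives $L(2P)\geq 3$, or $(a,b)=(1,2)$ contradicts type $(10)$, or $(a,b)=(1,1)$ and the explicit inclusion $2[0,e_3]+[0,e_1+e_2]\subseteq 2P$ again gives $L(2P)\geq 3$; in all cases $L(3P)\geq L(2P)+L(P)\geq 4$.

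Second, even in the tetrahedron-with-interior-point case your zonotope argument is incomplete by your own admission: the bound $\sum_i|\langle n,u_i\rangle|\leq 6$ holds only for the balanced reflexive simplex, and the hoped-for dichotomy ``$L(2P)=2$ forces the balanced simplex'' is precisely the unproven step (and is the kind of appeal to the Fano classification the paper deliberately avoids). The paper closes this case with a short argument that needs no balance hypothesis at all: the four homothets of $P$ with ratio $3/4$ centered at the vertices (cut out by the planes through the centroid parallel to the facets) cover $P$, so the interior point $E$ lies in the homothet at some vertex $A$; extending $AE$ to the point $F$ on the opposite facet gives $|AF|\geq \tfrac{4}{3}|AE|$, so $3P$ contains the segment $3AF$, which is parallel to the lattice segment $AE$ and of length at least $4|AE|$, hence of lattice length at least $4$. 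Replacing your third paragraph with this covering argument, and adding the Scarf-based analysis of the five-vertex case, would repair the proof.
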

\begin{proof}
Let $S, T,$ and $U$ be any 5-point lattice subsets of $P$, $Q$, and $R$ correspondingly.  By the above propositions, $S$, $T$, and $U$ are all of type $(10)$.
If any of the three polytopes has more than five points, then there are at least 15 lattice segments connecting them. Hence there are multiple segments of the same class. Reducing the number of points, we  get  sets $S$, $T$, and $U$ where at least one of these lattice sets is not of type $(10)$.

Hence we can assume that each of $P$, $Q$, and $R$ has exactly five lattice points and  is of type $(10)$. Furthermore,  all three polytopes are translates of each other.
It remains to show that $L(3P)\geq4$.

Let us assume first that $P$ has four lattice vertices  $A, B, C, D$ and a lattice point $E$ inside.  Let $G$ be the  centroid of $P$. Draw through $G$ four planes, each parallel to one of the facets of $P$. Each of the planes cuts off a tetrahedron off of $P$, which is similar to $P$ with a coefficient of $3/4$. These four tetrahedra 
cover $P$, so the interior lattice  point $E$ belongs to at least one of them, say, to the tetrahedron one of whose vertices is $A$. Then  if we continue $AE$ to the point of intersection  $F$ with the plane $BCD$, then $AE/AF\leq 3/4$ and  $3AF\geq 4AE$, so in $3P$ we  have a segment $3AF$ whose lattice length   is at least 4. We have shown that in this case $L(3P)\geq 4$.

Next, let $P$ of type (10) have 5 lattice vertices and no other lattice points. It was shown in \cite{Scarf}, Theorem~3.5 that $P$ is $\A{3}$ equivalent to a polytope with the vertices $(0,0,0)$, $(1, 0, 0)$, $(0,1,0)$, $(0,0,1)$, and $(1, a, b)$ where $\gcd(a, b)=1$ and $0\leq a\leq b$. The sum of the two segments, $[0, a, b]$ and $[0,1,0]$ is a parallelogram of area $b$.
If $b\geq 3$ then by Lemma~1.7 of \cite{SS1} the Minkowski length of this parallelogram is at least $3$ and hence $L(2P)\geq 3$. We are left with two cases $a=b=1$ and $a=1$, $b=2$ (if $a=0$ then $L(P)\neq 1$). In the second case $P$ is not of type $(10)$ as $(0,a,b)=(0,1,-1)$.  It remains to deal with the case $a=b=1$.  Let $I$ be the vertical segment of length 1 and let $J$ be the segment connecting the origin to $(1,1,0)$. Then $I+1/2J\subseteq P$ and hence $2I+J\subseteq 2P$, so we again have $L(2P)\geq 3$.
\end{proof}

Notice that we have also proved the following Corollary.

\begin{corollary} If $P$ and $Q$ are 3D lattice polytopes of Minkowski length 1 with at least 5 lattice points each, then $L(P+Q)\geq 3$ unless $P$ and $Q$ are of type $(10)$ and are the same up to translation.
\end{corollary}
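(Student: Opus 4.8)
The plan is to prove the statement in its contrapositive form: assuming $L(P+Q)<3$ I would show that $P$ and $Q$ each consist of exactly five lattice points, are both of type (10), and coincide up to translation. Note that $P$ and $Q$ are themselves non-trivial Minkowski summands of $P+Q$, so $L(P+Q)\geq 2$ by Definition~\ref{D:maximal}, and hence $L(P+Q)=2$. The whole argument is then a matter of assembling the propositions already proved, organized by the type classification of Proposition~\ref{T:class}. Concretely, I would fix a $5$-point lattice subset $S\subseteq P$ and a $5$-point lattice subset $T\subseteq Q$; by Proposition~\ref{T:class} each of $S,T$ has one of the five types 4+2+2+2, 3+3+2+2, 3+(7), 2+2+(6), or (10).

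The key step is to observe that the foregoing propositions already handle every pair of types other than $\bigl((10),(10)\bigr)$, each time forcing $L(P+Q)\geq 3$. If at least one of $S,T$ is of type 4+2+2+2 or 3+3+2+2, the corresponding proposition applies. Otherwise both $S$ and $T$ have type 3+(7), 2+2+(6), or (10), and I would invoke, case by case (using the symmetry $P\leftrightarrow Q$ to fold the mixed pairs): the proposition on two triangles with monochromatic sides for the pair $\bigl(\text{3+(7)},\text{3+(7)}\bigr)$; the proposition on types 2+2+(6) and 3+(7); the proposition on two copies of 2+2+(6); and the two propositions pairing (10) with 3+(7) and with 2+2+(6). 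The net effect is a dichotomy: if either $S$ or $T$ fails to be of type (10), then $L(P+Q)\geq 3$, contradicting our hypothesis. Hence under $L(P+Q)=2$ every $5$-point lattice subset of $P$, and of $Q$, is of type (10).

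To finish I would first exclude the possibility of more than five lattice points, reusing the counting already seen in the proof of Theorem~\ref{T:threeP}. If $P$ had at least six lattice points, six of them would span at least $\binom{6}{2}=15$ connecting segments, while $\Z_3\P^2$ has only $13$ classes; two of these segments would therefore lie in the same class, and since they involve at most four points they extend to some $5$-point subset of $P$ in which a class is repeated, i.e.\ a subset \emph{not} of type (10). This contradicts the previous paragraph, so $P$ (and likewise $Q$) has exactly five lattice points and is a genuine polytope of type (10). The proposition treating two type-(10) polytopes with $L(P+Q)=2$ then yields $P=Q$ up to translation, completing the argument.

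The step I expect to require the most care is the exhaustiveness check in the second paragraph: one must verify that the previously established propositions genuinely cover all ten unordered pairs of types, that the symmetric cases are correctly reduced via $P\leftrightarrow Q$, and that $\bigl((10),(10)\bigr)$ is the unique pair left untreated. Once this coverage is laid out, the remaining deductions are immediate.
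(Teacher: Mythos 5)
Your proposal is correct and takes essentially the same route as the paper: the paper obtains this corollary as a byproduct of the proof of Theorem~\ref{T:threeP}, which likewise combines the pairwise type propositions (showing every pair of types other than $\bigl((10),(10)\bigr)$ forces $L(P+Q)\geq 3$), the counting of $15$ segments against $13$ classes in $\Z_3\P^2$ to rule out more than five lattice points, and the two-type-$(10)$ proposition to conclude equality up to translation. Your exhaustiveness check over the pairs of types and the reduction to exactly five points match the paper's argument step for step.
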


Next example demonstrates that one could have $L(2P)=2$ for a 3D lattice polytope $P$ of type $(10)$.
  
\begin{example}  Let $P$ have the vertices $(-1,-1,-1)$, $(1,0, 0)$, $(0,1,0)$, and $(0,0,1)$. Then  $P$ is of type (10), $L(P)=1$, and $L(2P)=2$.
\end{example}

\begin{proposition}\label{P:5and6} Let $P$ and $Q$ be 3D lattice polytopes of Minkowski length 1. If $P$ has at least 6 lattice points and $Q$ has at least 5, then $L(P+Q)\geq 3$. 
\end{proposition}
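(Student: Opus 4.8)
The plan is to reduce the statement to the preceding propositions by a counting argument. Those propositions already establish $L(P+Q)\ge 3$ whenever $P$ and $Q$ contain five-point subsets $S$ and $T$ whose types (among $4+2+2+2$, $3+3+2+2$, $3+(7)$, $2+2+(6)$, and $(10)$, as classified in Proposition~\ref{T:class}) form \emph{any} pair except the single combination in which $S$ and $T$ are \emph{both} of type $(10)$; in that lone case one only concludes that $P$ and $Q$ are translates. Hence it suffices to exhibit in $P$ a five-point subset that is not of type $(10)$: pairing it with an arbitrary five-point subset of $Q$ then always lands in a covered case.

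To produce such a subset I would exploit the sixth lattice point of $P$. Choose any six lattice points of $P$; they span $\binom{6}{2}=15$ segments, while $\Z_3 \P^2$ has only $13$ classes, so by the pigeonhole principle two of these segments lie in the same class. Their endpoints comprise at most four distinct points (only three if the two segments share a vertex), so among the six chosen points there is a five-point subset $S$ containing all of them. This $S$ has two segments in a common class and therefore cannot be of type $(10)$; by Proposition~\ref{T:class} it is of type $4+2+2+2$, $3+3+2+2$, $3+(7)$, or $2+2+(6)$. This is exactly the reduction step used inside the proof of Theorem~\ref{T:threeP}, now applied to a single polytope.

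The proof then finishes by bookkeeping. Let $T$ be any five-point subset of $Q$, which exists since $Q$ has at least five lattice points. If $S$ is of type $4+2+2+2$ or $3+3+2+2$, the corresponding proposition gives $L(P+Q)\ge 3$ outright. If $S$ is of type $3+(7)$ or $2+2+(6)$, I split on the type of $T$: when $T$ is of type $4+2+2+2$ or $3+3+2+2$ I apply that proposition with the roles of $P$ and $Q$ interchanged (legitimate because $P$ has at least five lattice points, so $L(P+Q)=L(Q+P)\ge 3$), and when $T$ is of type $3+(7)$, $2+2+(6)$, or $(10)$ I invoke the matching pairwise proposition directly. Every combination is accounted for, so $L(P+Q)\ge 3$ in all cases.

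The argument is essentially accounting once the pigeonhole step is in place, so I do not expect a genuine obstacle. The only points demanding care are verifying that the earlier propositions together exhaust all type pairings except $(10)$--$(10)$, and checking that the $4+2+2+2$/$3+3+2+2$ proposition may be applied symmetrically in $P$ and $Q$ (which it can, since its hypotheses are symmetric and both polytopes have at least five lattice points). With those verified, the non-$(10)$ subset supplied by the counting step suffices to avoid the single uncovered case.
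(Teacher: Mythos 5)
Your proposal is correct and follows essentially the same route as the paper: the paper also uses the pigeonhole count of $\binom{6}{2}=15$ segments against the $13$ classes of $\Z_3\P^2$ to extract a five-point subset of $P$ not of type $(10)$, and then concludes via the earlier pairwise propositions, which cover every type combination except $(10)$--$(10)$. Your extra bookkeeping about swapping the roles of $P$ and $Q$ is a legitimate (and slightly more explicit) spelling-out of what the paper leaves implicit.
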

\begin{proof}
Since $P$ has $6$ lattice points, there are 15 lattice segments in $P$. Since there are 13 classes total, some of the segments will repeat and we will either have two segments
of the same class not sharing endpoints, or three segments of the same class forming a triangle. Hence we can pick $5$ points in $P$ that form a lattice subset  of type other than (10), which implies $L(P+Q)\geq 3$.
\end{proof}

\begin{theorem}\label{T:main} Let $Q_1+\cdots+Q_N\subseteq P$ be a maximal decomposition. Let $I(Q_i)$ denote the number of interior lattice points with respect to the 3D topology.
Then the overall number of interior lattice points
$$I=\sum_{i=1}^{N}I(Q_i)\leq 4.
$$
Furthermore, if more than one $Q_i$ has interior lattice points with respect to  the 3D topology, we have $I\leq 2$.
\end{theorem}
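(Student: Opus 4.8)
The plan is to bound the interior points summand by summand, exploiting that every sub-sum of a maximal decomposition again has Minkowski length equal to its number of summands. Concretely, if $Q_1+\cdots+Q_N\subseteq P$ is a maximal decomposition then $N=L(P)$, and for any index set $S\subseteq\{1,\dots,N\}$ one has $L\bigl(\sum_{i\in S}Q_i\bigr)=|S|$: the inequality $\ge|S|$ is immediate since each $Q_i$ contains a primitive segment, while $>|S|$ would, after adjoining one primitive segment from each of the remaining $N-|S|$ summands, produce a Minkowski sum of $N+1$ primitive segments inside $P$, contradicting $L(P)=N$. In particular $L(Q_i+Q_j)=2$ and $L(Q_i+Q_j+Q_k)=3$ for distinct indices, which is exactly the input the earlier propositions require.

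First I would observe that an interior lattice point in the $3$D topology forces the relevant summand to be full-dimensional, and that a full-dimensional lattice polytope has at least $4$ vertices; together with the interior point this yields at least $5$ lattice points. Hence every $Q_i$ with $I(Q_i)\ge 1$ is a $3$D lattice polytope of Minkowski length $1$ with at least five lattice points. If three such summands existed, Theorem~\ref{T:threeP} would force $L$ of their sum to be $\ge 4$, contradicting $L(Q_i+Q_j+Q_k)=3$. Therefore \emph{at most two} summands carry interior points.

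The bound now splits into two cases. If at most one summand, say $Q_1$, has interior points, I would invoke that a $3$D lattice polytope of Minkowski length $1$ has at most $8$ lattice points (the mod-$2$ pigeonhole argument recalled at the start of this section); subtracting its at least $4$ vertices leaves at most $4$ non-vertex points, so $I=I(Q_1)\le 4$. If exactly two summands $Q_1,Q_2$ have interior points, then both are $3$D of Minkowski length $1$ with at least five lattice points and $L(Q_1+Q_2)=2$; the corollary following Theorem~\ref{T:threeP} then forces $Q_1$ and $Q_2$ to be of type $(10)$ and equal up to translation, while Proposition~\ref{P:5and6} rules out either of them having six or more lattice points. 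Thus each has exactly five lattice points, i.e.\ is a tetrahedron with a single interior point, so $I(Q_1)=I(Q_2)=1$ and $I=2$. This last case simultaneously establishes the \textbf{furthermore} clause.

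The genuinely hard work sits in the results being invoked—above all Theorem~\ref{T:threeP} and the type-$(10)$ rigidity corollary—rather than in their assembly here. Within the present argument the only delicate point is the reduction itself: verifying that an interior point in the $3$D topology genuinely forces full-dimensionality, and hence the five-point count, so that the earlier classification of five-point configurations and the pairwise and triple-sum estimates actually apply. Once that reduction is secured, the case analysis is short and the two bounds $I\le 4$ and (for two nontrivial summands) $I\le 2$ follow immediately.
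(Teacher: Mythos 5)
Your proof is correct and follows essentially the same route as the paper's: interior points force at least five lattice points, Theorem~\ref{T:threeP} caps the number of such summands at two, the mod-$2$ bound of eight lattice points gives $I\le 4$ in the single-summand case, and Proposition~\ref{P:5and6} gives $I\le 2$ in the two-summand case. Your explicit verification that every sub-sum of a maximal decomposition satisfies $L\bigl(\sum_{i\in S}Q_i\bigr)=|S|$ is a point the paper leaves implicit, and making it is a genuine (if small) improvement in rigor rather than a different approach.
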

\begin{proof} 
In order to have an interior lattice point with respect to  the 3D topology, $Q_i$ has to have at least 5 lattice points. Hence, by Theorem \ref{T:threeP}, at most two of the $Q_i$'s could have interior lattice points and be three-dimensional.  Let's assume that this holds for two of  the $Q_i$'s.   If one of these two  $Q_i$'s has at least 2 interior lattice points, we get a contradiction with Proposition~\ref{P:5and6}. Hence in this case the total number of   interior lattice points is at most 2.  If only one of the $Q_i$'s  has interior lattice points with respect to  the 3D topology, then there are at most 4 of them, as $L(Q_i)=1$ and $Q_i$ has at most 8 lattice points total.
\end{proof}
The following example shows that there exists a Minkowski length one 3D polytope that has 4 interior lattice points, so the bound of Theorem~\ref{T:main} is sharp.

\begin{example} Consider a simplex $P$ with the vertices $(0,0,0)$,  $(1,3,0)$,  $(0,2,3)$, and $(4,1,3)$. Then  the interior lattice points of $P$  are  
$(1,2,1)$,  $(1,2,2)$,  $(1,1,1)$, and $(2,1,2)$. This can be checked by hand or using Polymake~\cite{Polymake}.  It is easy to verify that there are no parallel 
lattice segments connecting lattice points in $P$, which implies that  $L(P)=1$.
\end{example}

An example of a 3D lattice polytope of Minkowski length one with 8 lattice points (all of them on the boundary) was given in an MSRI-UP project directed by John Little \cite{MSRI}.  The number of lattice points in a lattice polytope in the $n$-space is at most $2^n$. A group of REU students at Kent  in Summer 2011 constructed  a lattice $n$-dimensional Minkowski length one polytope with $2^n$ points. It would be interesting to see  if there exists an $n$-dimensional simplex that has $2^n$ lattice points and Minkowski length one. 

\subsection*{Acknowledgments} 

We are thankful to the anonymous referee for pointing out  a gap in one of the arguments as well as for numerous corrections and suggestions.


\end{document}